\theoremstyle{plain}
\newtheorem{theo}{Theorem}[section]
\newtheorem*{theorem*}{Main Theorem}
\newtheorem{lemm}{Lemma}[section]
\newtheorem{prop}{Proposition}[section]
\theoremstyle{definition}
\newtheorem{rema}{Remark}[section]
\newcommand{\DP}[1]{(\tilde{\nabla}_{#1}P)}
\newcommand{\DPJ}[1]{(\tilde{\nabla}_{#1}(PJ))}
\newcommand{\nks}{\ensuremath{\mathbb{S}^3 \times
\mathbb{S}^3}}   
\newcommand{\e}{\varepsilon}
\numberwithin{equation}{section}
\begin{document}

\title[Isotropic Lagrangian submanifolds
in NK $\mathbb{S}^3\times \mathbb{S}^3$]
{On isotropic Lagrangian
submanifolds in the homogeneous nearly K\"ahler $\mathbb{S}^3\times
\mathbb{S}^3$}

\author{Zejun Hu and Yinshan Zhang}

\thanks{This project was supported by NSFC (Grant No. 11371330)}

\begin{abstract}
In this paper, we show that isotropic Lagrangian submanifolds in a
$6$-dimensional strict nearly K\"ahler manifold are totally
geodesic. Moreover, under some weaker conditions, a complete
classification of the $J$-isotropic Lagrangian submanifolds in the
homogeneous nearly K\"ahler $\mathbb{S}^3\times \mathbb{S}^3$ is
also obtained. Here, a Lagrangian submanifold is called
$J$-isotropic, if there exists a function $\lambda$, such that
$g((\nabla h)(v,v,v),Jv)=\lambda$ holds for all unit tangent vector
$v$.

\end{abstract}

\maketitle

\footnote{2010 \textit{Mathematics Subject Classification}. 53B35,
53C30, 53C42, 53D12.}

\footnote{\textit{Key words and phrases}. Nearly K\"ahler
$\mathbb{S}^3 \times \mathbb{S}^3$, Lagrangian submanifold,
isotropic submanifold, $J$-parallel, totally geodesic.}

\section{Introduction}

Nearly K\"ahler (abbrev. NK) manifolds are a special class of almost
Hermitian manifolds with almost complex structure $J$ satisfying
that $\tilde \nabla J$ is skew-symmetric. In 1970s, A. Gray made
systemically studies on the geometry of NK manifolds (cf. \cite{Gr1,
Gr3}). Towards the important problem of classifying the NK
manifolds, P. A. Nagy made significant contributions on the
decomposition of complete, simply connected, strict NK manifolds,
and in his works \cite{Nag1, Nag2} it was shown that $6$-dimensional
NK manifolds play an distinguished role for the study of generic NK
manifolds. In \cite{But1, But2}, J. B. Butruille further proved that
homogeneous $6$-dimensional NK manifolds must be the NK
$\mathbb{S}^6$, $\mathbb{S}^3 \times \mathbb{S}^3$, the complex
projective space $\mathbb{C}P^3$ and the flag manifold
$SU(3)/U(1)\times U(1)$. On the other hand, we would mention the
recent remarkable development that L.~Foscolo and M.~Haskins
\cite{F-H} have constructed inhomogeneous NK structures on both
manifolds of $\mathbb{S}^6$ and $\mathbb{S}^3 \times \mathbb{S}^3$.

Amongst the geometry of submanifolds of the NK manifolds, most
researches concentrate on the homogeneous NK $\mathbb{S}^6$, and
there are rich literatures (cf. \cite{B-V-W, D-O-V-V, D-V-V2, Ej,
Lo, Vr} and the references therein). We also noticed that recently a
broader study of submanifolds in NK manifolds was investigated in
\cite{S-S} by Sch\"afer and Smoczyk.

In this paper we mainly restrict to study submanifolds of the
homogeneous NK $\mathbb{S}^3 \times \mathbb{S}^3$. Recall that a
submanifold of an almost Hermitian manifold is called almost
complex, if the almost complex structure $J$ preserves the tangent
space. The study of almost complex surfaces of the homogeneous NK
$\mathbb{S}^3 \times \mathbb{S}^3$ was initiated in \cite{B-D-D-V},
where all almost complex surfaces with parallel second fundamental
form were classified. In \cite{D-L-M-V, D-V1, H-Z}, we can find
further developments on the study of the almost complex surfaces in
NK $\mathbb{S}^3 \times \mathbb{S}^3$.

Recall also that a submanifold $M$ of an almost Hermitian manifold
$N$ is called Lagrangian, if the almost complex structure $J$
interchanges the tangent and the normal spaces and thus the
dimension of $M$ is half the dimension of $N$. After the fruitful
study on the Lagrangian submanifolds of the NK $\mathbb{S}^6$ during
the past several decades, in recent years the study of the
Lagrangian submanifolds of the homogeneous NK $\mathbb{S}^3\times
\mathbb{S}^3$ attracts much attention. It is well known that both
factors, $\mathbb{S}^3\times \{pt\}$ and $\{pt\}\times
\mathbb{S}^3$, and the diagonal $\{(x,x)\,|\,x\in\mathbb{S}^3\}$ are
examples of totally geodesic Lagrangian submanifolds in the NK
$\mathbb{S}^3\times \mathbb{S}^3$, in addition to these simplest
ones, A.~Moroianu and U.~Semmelmann \cite{M-S1} constructed several
interesting and new examples of Lagrangian submanifolds of the NK
$\mathbb{S}^3 \times \mathbb{S}^3$. In fact, as usual denoting
$\mathbb{H}$ the quaternion space with $\mathbf{i},\mathbf{j},
\mathbf{k}$ its imaginary units and regarding the unit $3$-sphere
$\mathbb{S}^3$ as the set of all the unit quaternions in
$\mathbb{H}$, it was shown in \cite{M-S1} that the graphs of the
product of some simple functions on $\mathbb{S}^3$ yield Lagrangian
submanifolds of the NK $\mathbb{S}^3 \times \mathbb{S}^3$ with
significantly properties. Furthermore, very recent investigations
reveal that actually all the totally geodesic Lagrangian
submanifolds can be produced in such natural way (see
\cite{Z-D-H-V-W}), and so do the Lagrangian submanifolds with
nonzero constant sectional curvature (see \cite{D-V-W}). In this
respect, we would introduce the following interesting result:
\begin{theo}[\cite{Z-D-H-V-W}]\label{thm:1.1}
A Lagrangian submanifold of any $6$-dimensional strict NK manifold
is of parallel second fundamental form if and only if it is totally
geodesic.
\end{theo}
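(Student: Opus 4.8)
The ``if'' direction is trivial: a totally geodesic submanifold has $h\equiv 0$, hence $\nabla h\equiv 0$. For the converse I assume $\nabla h=0$ and aim to convert this, via the structure equations of a $6$-dimensional strict NK manifold, into a pointwise algebraic identity forcing $h=0$. I write $\tilde\nabla$ for the ambient connection, $\nabla$ for the induced (van der Waerden--Bortolotti) connection of $M$, $\nabla^\perp$ for the normal connection, and $G(X,Y):=(\tilde\nabla_XJ)Y$; recall that on a $6$-dimensional strict NK manifold $g(G(X,Y),Z)$ is a $3$-form, $G(X,JY)=-JG(X,Y)$, $\|G(X,Y)\|^2=\|X\|^2\|Y\|^2-g(X,Y)^2-g(JX,Y)^2$ in the standard normalization, the manifold is Einstein with positive scalar curvature, $(\tilde\nabla_UG)(X,Y)$ equals a universal algebraic expression in $g$ and $J$, and $\tilde R$ satisfies Gray's curvature identities tying it to $g$, $J$ and $G$. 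It matters that the ambient be \emph{strict} ($G\not\equiv 0$): the statement is false for K\"ahler ambients --- flat Clifford-type tori are Lagrangian with parallel but nonzero $h$ --- so the $G$-terms below must be used in a load-bearing way rather than discarded.

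The first step records how $J$ interacts with the submanifold geometry. Since $M$ is Lagrangian, $J(TM)=T^\perp M$, and the Weingarten formula applied to $JZ$ gives $A_{JZ}X=-Jh(X,Z)-(G(X,Z))^\top$ and $\nabla^\perp_X(JZ)=J\nabla_XZ+(G(X,Z))^\perp$. Hence the cubic form $C(X,Y,Z):=g(h(X,Y),JZ)$ is symmetric in $X,Y$ and obeys $C(X,Y,Z)-C(X,Z,Y)=g(G(X,Y),Z)$, i.e.\ its antisymmetrization in the last two slots is the restriction to $M$ of the NK $3$-form --- the first appearance of strictness. Differentiating $C$ and using the formula for $\nabla^\perp(JZ)$ yields $(\nabla_WC)(X,Y,Z)=g((\nabla_Wh)(X,Y),JZ)+g(h(X,Y),G(W,Z))$, so under $\nabla h=0$ the cubic form satisfies the first-order relation
\[
(\nabla_WC)(X,Y,Z)=g\bigl(h(X,Y),G(W,Z)\bigr),
\]
whose right-hand side is purely algebraic; also $\nabla^\perp H=0$, where $H=\operatorname{tr} h$.

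The heart of the argument is to differentiate this relation once more and antisymmetrize. The Ricci identity expresses $(\nabla^2_{U,W}C-\nabla^2_{W,U}C)$ through the intrinsic curvature $R$ of $M$ acting on $C$; differentiating the right-hand side $g(h(X,Y),G(W,Z))$ and again invoking $\nabla h=0$, the Weingarten formula, and the universal expression for $\tilde\nabla G$, produces terms quadratic in $h$ --- equivalently in $C$, since $h(X,Y)=\sum_a C(X,Y,e_a)Je_a$ --- together with lower-order $G$-terms. Equating the two expressions, then inserting the Gauss equation to trade $R$ for $\tilde R$ and $h$, and the Einstein condition with Gray's curvature identities to eliminate $\tilde R$ in favour of $g$, $J$, $G$, should give a pointwise identity relating $\|C\|^2$ and traces of $C$ (such as $g(H,G(\cdot,\cdot))$) to quantities built only from $G$. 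Contracting appropriately and using that for an orthonormal pair of tangent vectors of a Lagrangian submanifold $\|G(X,Y)\|^2=1$ (since $JX$ is automatically normal) --- a clean consequence of strictness --- the conclusion should be $\|C\|^2=0$, hence $h\equiv 0$ and $M$ totally geodesic.

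I expect the main obstacle to be the bookkeeping of the $G$- and $\tilde\nabla G$-corrections through the two covariant differentiations: each time $J$ is commuted past $\tilde\nabla$ one collects a $G$-term, so the relation between $R^\perp$ and $R$, between $\nabla^\perp(JX)$ and $J\nabla X$, and the ambient curvature identities all carry extra terms not present in the K\"ahler setting, and these must be tracked without sacrificing the sign needed at the last step. A secondary subtlety is that $M$ is not assumed minimal, so the minimal-submanifold form of Simons' identity is unavailable; one instead works directly with the parallel tensors $h$ and $H$, which can be analysed pointwise.
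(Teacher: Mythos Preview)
This theorem is not proved in the present paper at all: it is quoted from \cite{Z-D-H-V-W} as background, and the paper's own contribution begins with Theorem~\ref{thm:1.2}. So there is no ``paper's proof'' to compare against here; any assessment must be on the internal merits of your sketch.

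On those merits, two concrete errors undermine the argument as written. First, for a Lagrangian submanifold of a strict NK manifold one has $G(X,Y)\in T^\perp M$ for all tangent $X,Y$ (this is the Sch\"afer--Smoczyk result the paper cites just before \eqref{eqn:2.11}). Consequently $(G(X,Z))^\top=0$, the Weingarten identity is exactly $A_{JZ}X=-Jh(X,Z)$ as in \eqref{eqn:2.12}, and the cubic form $C(X,Y,Z)=g(h(X,Y),JZ)$ is \emph{totally} symmetric. Your claimed relation $C(X,Y,Z)-C(X,Z,Y)=g(G(X,Y),Z)$ therefore has right-hand side identically zero on $TM$; it is not ``the first appearance of strictness'' but a vacuous identity, and the mechanism you build on it collapses. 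The genuine place where $G$ enters is one derivative higher, through \eqref{eqn:2.12} and \eqref{eqn:2.17}: $g((\nabla h)(X,Y,Z),JW)-g((\nabla h)(X,Y,W),JZ)=g(h(X,Y),G(W,Z))$, which under $\nabla h=0$ gives $g(h(X,Y),G(W,Z))=0$ --- a strong algebraic constraint you never isolate. Second, your ``secondary subtlety'' is not one: Lagrangian submanifolds of strict NK manifolds are automatically minimal (again Sch\"afer--Smoczyk, Theorem~A of \cite{S-S}), so $H\equiv 0$ from the outset and the trace terms you plan to carry simply vanish.

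Your global strategy (differentiate, antisymmetrize via the Ricci identity, reduce to a pointwise algebraic obstruction) is the right shape, and is close in spirit to how the paper handles the $J$-isotropic case in Sections~4 and~6. But to make it go through you must start from the correct structural facts above; in particular, the load-bearing identity is $g(h(X,Y),G(W,Z))=0$ (from $\nabla h=0$ and \eqref{eqn:2.17}), which together with minimality and the nondegeneracy of $G$ on a $3$-plane is what actually forces $h=0$.
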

In fact, as the main result of \cite{Z-D-H-V-W}, after finishing the
proof of Theorem \ref{thm:1.1}, a complete classification of all
totally geodesic Lagrangian submanifolds of the homogeneous NK
$\mathbb{S}^3 \times \mathbb{S}^3$ was explicitly demonstrated,
which consists of six non-congruent examples.

Still focusing on the study of Lagrangian submanifolds of the NK
manifolds, in this paper we first study its subclass of the
so-called {\it isotropic} ones. According to B. O'Neill \cite{ON}, a
submanifold of a Riemannian manifold is called isotropic if and only
if for any tangent vector $v$ at a point $p$, we have the relation
that
\begin{equation}\label{eqn:1.0}
g(h(v,v),h(v,v))=\mu^2(p) (g(v,v))^2,
\end{equation}
where $h$ denotes the second fundamental form of the immersion and
$\mu$ is a non-negative function on the submanifold. Similarly as
for Lagrangian submanifolds of K\"ahler manifolds, studied by
Montiel and Urbano \cite{M-U}, in present setting the following
result holds, which gives a remarkable counterpart of Theorem
\ref{thm:1.1}.
\begin{theo}\label{thm:1.2}
A Lagrangian submanifold of any $6$-dimensional strict NK manifold
is isotropic if and only if it is totally geodesic.
\end{theo}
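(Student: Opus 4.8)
The plan is to convert the isotropy condition into a pointwise algebraic constraint on the second fundamental form, classify the solutions of that constraint, and then eliminate the nonzero solution using the structure equations of the ambient NK manifold. Recall first that for a Lagrangian submanifold $M^{3}$ of a $6$-dimensional strict NK manifold the cubic form $C(X,Y,Z):=g(h(X,Y),JZ)$ is totally symmetric: it is symmetric in $X,Y$ because $h$ is, and one computes $C(X,Y,Z)-C(X,Z,Y)=-g((\tilde\nabla_{X}J)Z,Y)$; since $\tilde\nabla J$ is skew-symmetric this defect is a totally skew $3$-form in $X,Y,Z$, and, combined with symmetry in the first two arguments, the $S_{3}$-cyclic identity on the $3$-dimensional space $T_{p}M$ forces it to vanish. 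Putting $P(v):=C(v,v,v)$, total symmetry gives $C(v,v,e_{k})=\tfrac13\,\partial_{e_{k}}P(v)$ in an orthonormal frame, so that $g(h(v,v),h(v,v))=\sum_{k}C(v,v,e_{k})^{2}=\tfrac19\,|\nabla P(v)|^{2}$. Hence \eqref{eqn:1.0} is equivalent, at each point $p$, to the identity $|\nabla P|^{2}=9\mu(p)^{2}\,|v|^{4}$ of degree-$4$ homogeneous polynomials on $T_{p}M\cong\mathbb{R}^{3}$.

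The core is then the algebraic lemma: \emph{a homogeneous cubic $P$ on Euclidean $\mathbb{R}^{3}$ with $|\nabla P|^{2}=c\,|v|^{4}$ is either identically $0$ or, after an orthogonal change of coordinates, $P=\lambda\bigl(v_{3}^{3}-3v_{3}(v_{1}^{2}+v_{2}^{2})\bigr)$ with $c=9\lambda^{2}$.} One proves this by restriction to $S^{2}$: for $p:=P|_{S^{2}}$ Euler's identity gives $|\nabla_{S^{2}}p|^{2}=|\nabla P|^{2}-\langle\nabla P,x\rangle^{2}=c-9p^{2}$, and combining this with the Bochner formula on $S^{2}$, with the identity $|\operatorname{Hess}_{\mathbb{R}^{3}}P|^{2}+D_{\nabla(\Delta P)}P=10c\,|v|^{2}$ obtained by applying $\Delta$ to $|\nabla P|^{2}=c|v|^{4}$, and with the decomposition of $P$ into its harmonic part and its $|v|^{2}\cdot(\text{linear})$ part (i.e.\ its spherical-harmonic pieces), one is led to $p=\lambda\cos 3\theta$ for a suitable polar angle $\theta$, that is, $P$ is the stated cubic. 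Feeding this back, at each $p\in M$ one has $h_{p}=0$ or, in a suitable orthonormal frame $\{e_{1},e_{2},e_{3}\}$, $h(e_{3},e_{3})=\mu Je_{3}$, $h(e_{\alpha},e_{\alpha})=-\mu Je_{3}$, $h(e_{\alpha},e_{3})=-\mu Je_{\alpha}$ $(\alpha=1,2)$, $h(e_{1},e_{2})=0$, with $\mu=\mu(p)>0$.

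Since one implication of the theorem is trivial, it remains to prove that $U:=\{p\in M: h_{p}\neq 0\}$ is empty. Assume not. On $U$ the frame above selects a smooth unit tangent vector field $T:=e_{3}$ (oriented so that $C(T,T,T)=\mu>0$), and the entire second fundamental form is recovered from $(T,\mu)$ by
\[
h(X,Y)=\mu\bigl(4\langle X,T\rangle\langle Y,T\rangle-\langle X,Y\rangle\bigr)JT-\mu\langle X,T\rangle\,JY-\mu\langle Y,T\rangle\,JX,
\]
so in particular $H=-\tfrac{\mu}{3}JT$ and $JH=\tfrac{\mu}{3}T$. I then substitute this $h$ into the Codazzi equation $(\bar\nabla_{X}h)(Y,Z)-(\bar\nabla_{Y}h)(X,Z)=(\tilde R(X,Y)Z)^{\perp}$, using $\tilde\nabla_{X}(JW)=J\tilde\nabla_{X}W+(\tilde\nabla_{X}J)W$ to express $\bar\nabla h$ in terms of $\nabla_{X}T$, $d\mu$ and $\tilde\nabla J$, and invoke the Gauss equation together with the structure identities satisfied by $\tilde\nabla J$ on a strict NK manifold to bring $\nabla_{X}T$ and $(\tilde R(X,Y)Z)^{\perp}$ under control. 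Matching the $Je_{3}$- and $Je_{\alpha}$-components on the two sides then produces an overdetermined first-order system for $(T,\mu)$ admitting no solution, which is the required contradiction; hence $U=\emptyset$ and $M$ is totally geodesic.

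The step I expect to be the main obstacle is precisely the last one. Because the second fundamental form displayed above is so rigid, the Codazzi, Gauss and Ricci equations yield a large, superficially consistent family of relations, and the delicate points are: (i) to determine exactly which components of $(\tilde R(X,Y)Z)^{\perp}$ persist when $X,Y,Z$ are all tangent to a Lagrangian submanifold of an \emph{arbitrary} strict NK $6$-manifold; and (ii) to first differentiate the relations $JH=\tfrac{\mu}{3}T$ and the pointwise shape of $h$ so as to express $\nabla_{X}T$ through $T$ and $\mu$ before closing the system. Carrying this through uniformly for a general strict NK ambient, so that the term $\tilde\nabla J$ is seen to obstruct the existence of such an $h$, is the crux of the proof.
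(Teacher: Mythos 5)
Your reduction of isotropy to the pointwise identity $|\nabla P|^{2}=9\mu^{2}|v|^{4}$ for the cubic $P(v)=g(h(v,v),Jv)$ is fine, and the resulting normal form for a nonzero solution (eigenvalue pattern $(\mu,-\mu,-\mu)$ at the maximal direction, hence $h$ determined by a unit field $T$ and $\mu$) is consistent with what one gets from the Montiel--Urbano frame. But the argument as written has a genuine gap exactly where you predict it: the final elimination of the nonzero normal form is only a plan. You assert that substituting $h$ into the Codazzi and Gauss equations of an \emph{arbitrary} strict NK $6$-manifold ``produces an overdetermined first-order system for $(T,\mu)$ admitting no solution,'' but no such system is derived and no contradiction is exhibited; for a general strict NK ambient the curvature term $(\tilde R(X,Y)Z)^{\perp}$ is not under control (the submanifold need not be curvature-invariant, cf.\ \eqref{eqn:2.10}), so this step cannot be waved through. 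The auxiliary algebraic lemma is also only sketched (``one is led to''), though its statement is plausible.

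The missing idea that closes the argument instantly is the theorem of Sch\"afer--Smoczyk (Theorem A of \cite{S-S}, see also \cite{G-I-P}): a Lagrangian submanifold of a strict nearly K\"ahler $6$-manifold is automatically \emph{minimal}. Your own normal form gives $H=-\tfrac{\mu}{3}JT$, so minimality forces $\mu\equiv 0$ on $U$, i.e.\ $U=\emptyset$ and $M$ is totally geodesic --- no Codazzi analysis is needed at all. This is precisely how the paper argues, and in a way that also bypasses your classification lemma: at a point where $h\neq 0$, take the Montiel--Urbano frame at the maximum of $F(v)=g(h(v,v),Jv)$, so $h(e_{1},e_{j})=\mu_{j}Je_{j}$ with $\mu_{1}=\mu$; polarizing \eqref{eqn:1.0} gives $\mu^{2}-\mu\mu_{j}-2\mu_{j}^{2}=0$, hence $\mu_{j}\in\{-\mu,\tfrac{\mu}{2}\}$, and minimality ($\mu+\mu_{2}+\mu_{3}=0$) then forces $\mu=0$, a contradiction. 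So: either import minimality to finish your proof in one line, or, if you insist on avoiding it, you must actually carry out (not just announce) the structure-equation computation, which is substantially harder and, as it stands, unsubstantiated.
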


We remark that Theorem \ref{thm:1.2}, together with Theorem
\ref{thm:1.1}, implies that for Lagrangian submanifolds of any
$6$-dimensional strict NK manifold the isotropic condition is
equivalent to the condition that the second fundamental form is
parallel. As generally a Lagrangian submanifold of arbitrary
$6$-dimensional strict NK manifold is not obviously {\it
curvature-invariant} (see \cite{M-U} for the notion and
\eqref{eqn:2.10} for the assertion), Theorem \ref{thm:1.2} gives a
meaningful generalization of Proposition 1 in \cite{M-U} which
states that if a Lagrangian submanifold in a K\"ahler manifold is
curvature-invariant, then the constant isotropic condition implies
that the second fundamental form is parallel.

Next to isotropic submanifolds which correspond to satisfying the
restriction \eqref{eqn:1.0}, for Lagrangian submanifolds of a NK
manifold, one can further consider the subclass of Lagrangian
submanifolds which satisfy the condition that for any tangent vector
$v$ at a point $p$, we have the {\it special isotropic} relation
\begin{equation}\label{eqn:1.1}
g((\nabla h)(v,v,v),Jv)=\lambda(p) (g(v,v))^2
\end{equation}
for a function $\lambda$ on the submanifold.

For simplicity, a Lagrangian submanifold satisfying \eqref{eqn:1.1}
will be called as {\it $J$-isotropic}. In particular, following the
terminology of \cite{D-V} (where the authors only considered the NK
$\mathbb{S}^6$), we will call a $J$-isotropic Lagrangian submanifold
with vanishing $\lambda$ being {\it $J$-parallel}.

Recall that Djori\'c and Vrancken \cite{D-V} proved that for
Lagrangian manifolds of the homogeneous NK $\mathbb{S}^6$, the
notions of {\it $J$-isotropic} and {\it $J$-parallel} are in fact
equivalent (Theorem B in \cite{D-V}), whereas the $J$-parallel
condition gives more examples than that of parallel second
fundamental from. In fact, by the classification Theorem A in
\cite{D-V}, besides the totally geodesic $3$-sphere, there are two
other examples of $J$-parallel Lagrangian submanifolds in the
homogeneous NK $\mathbb{S}^6$.

The main purpose of the present paper is to extend the results of
\cite{D-V} by replacing the ambient homogeneous NK $\mathbb{S}^6$
with the homogeneous NK $\mathbb{S}^3\times \mathbb{S}^3$. Although
the NK $\mathbb{S}^6$ is a space form whereas the NK
$\mathbb{S}^3\times \mathbb{S}^3$ is much more complicated with the
particular properties that it is even neither locally symmetric nor
Chern flat (cf. \cite{H-Z}), it turns out that, once overcoming the
difficulty brought by the complicated expression of its Riemannian
curvature tensor, we can still succeed in achieving a complete
classification of the {\it $J$-isotropic} Lagrangian submanifolds in
the homogeneous NK $\mathbb{S}^3\times \mathbb{S}^3$. To speak
accurately, we will generalize the results of \cite{D-V} by showing
that the case for NK $\mathbb{S}^3\times \mathbb{S}^3$ is totally
similar as for NK $\mathbb{S}^6$, i.e., for Lagrangian submanifolds
of the homogeneous NK $\mathbb{S}^3\times \mathbb{S}^3$, the two
conditions of {\it $J$-isotropic} and {\it $J$-parallel} are
equivalent. As the main result, a classification theorem can be
obtained as follows:
\begin{theorem*}\label{thm:000}
Let $M$ be a Lagrangian submanifold in the homogeneous NK $\mathbb{S}^3\times
\mathbb{S}^3$. If $M$ is $J$-isotropic in the sense of \eqref{eqn:1.1},
then the isotropic function $\lambda$ vanishes, and $M$ is locally
given by one of the following immersions:
\begin{enumerate}
\item[(1)] \qquad
$f_1:\ \ \mathbb{S}^3 \to\nks\ \ defined\ by\ \ u\mapsto (1,u)$.

\vskip 1mm

\item[(2)]\qquad
$f_2:\ \ \mathbb{S}^3 \to\nks\ \ defined\ by\ \ u\mapsto (u,1)$.

\vskip 1mm

\item[(3)]\qquad
$f_3:\ \ \mathbb{S}^3\to\nks\ \ defined\ by\ \ u\mapsto (u,u)$.

\vskip 1mm

\item[(4)]\qquad
$f_4:\ \ \mathbb{S}^3\to\nks\ \ defined\ by\ \
      u\mapsto (u,u\mathbf{i})$.

\vskip 1mm

\item[(5)]\qquad
$f_5:\ \ \mathbb{S}^3\to \nks\ \ defined\ by\ \
 u\mapsto(u\mathbf{i}u^{-1},u^{-1})$.

\vskip 1mm

\item[(6)]\qquad
$f_6:\ \ \mathbb{S}^3\to \nks\ \ defined\ by\ \
     u\mapsto(u^{-1},u\mathbf{i}u^{-1})$.

\vskip 1mm

\item[(7)]\qquad
$f_7:\ \ \mathbb{S}^3 \to\nks\ \ defined\ by\ \ u\mapsto
(u\mathbf{i}u^{-1},u\mathbf{j}u^{-1})$.

\vskip 1mm

\item[(8)]\qquad
$f_8:\ \ \mathbb{R}^3 \to\nks\ \ defined\ by\ \
(u,v,w)\mapsto(p(u,w),q(u,v))$, where $p$ and $q$ are constant mean
curvature tori in $\mathbb{S}^3$, given respectively by
$$
\begin{aligned}
p(u,w)&=\Big( \cos(\tfrac{\sqrt{3}u}{2}) \cos(\tfrac{\sqrt{3}w}{2}),
        \cos(\tfrac{\sqrt{3}u}{2})\sin(\tfrac{\sqrt{3}w}{2}),\\
        &\hspace{49mm} \sin(\tfrac{\sqrt{3}u}{2}) \cos(\tfrac{\sqrt{3}w}{2},
        \sin(\tfrac{\sqrt{3}u}{2} )\sin(\tfrac{\sqrt{3}w}{2}) \Big),
\end{aligned}
$$
$$
\begin{aligned}
q(u,v)&=\tfrac{1}{\sqrt{2}}\Big(\cos(\tfrac{\sqrt{3}v}{2})(\sin(\tfrac{\sqrt{3}u}{2})+\cos(\tfrac{\sqrt{3}u}{2})),
 \sin(\tfrac{\sqrt{3}v}{2})(\sin(\tfrac{\sqrt{3}u}{2})+\cos(\tfrac{\sqrt{3}u}{2})),\\
 &\ \ \ \ \ \ \ \cos(\tfrac{\sqrt{3}v}{2})(\sin(\tfrac{\sqrt{3}u}{2})-\cos(\tfrac{\sqrt{3}u}{2})),
 \sin(\tfrac{\sqrt{3}v}{2})(\sin(\tfrac{\sqrt{3}u}{2})-\cos(\tfrac{\sqrt{3}u}{2}))
  \Big).
 \end{aligned}
$$
\end{enumerate}
\end{theorem*}
\begin{rema}
The immersions $\{f_i\}_{i=1}^6$ are totally geodesic, and the
immersions $f_1,f_2,f_3$, $f_7, f_8$ are of constant sectional
curvature. It should be pointed out that due to the restriction of
the almost product $P$ of the NK $\mathbb{S}^3\times \mathbb{S}^3$,
the two pairs of $f_1$ and $f_2$, as well as $f_5$ and $f_6$, are
not congruent (cf. \cite{Z-D-H-V-W}). Therefore, combining the main
results of \cite{D-V-W} and \cite{Z-D-H-V-W}, the above Main Theorem
shows that the $J$-isotropic condition \eqref{eqn:1.1} is a very
nice concept which gives a unified characterization of all the
totally geodesic Lagrangian submanifolds together with those of
constant sectional curvature.
\end{rema}
\begin{rema}
Related to our result, it is worth mentioning that isotropic
Lagrangian submanifolds of the complex space forms have been
completely classified (cf. \cite{L-W, M-U, Vr1}). Also, Wang, Li and
Vrancken \cite{W-L-V} have considered Lagrangian submanifolds of the
complex space forms with an {\it isotropic cubic form}, and a
complete classification is obtained if the submanifolds are of
dimension $3$.
\end{rema}

\vskip 2mm

The paper is organized as follows. In section 2, we review relevant
materials for NK manifolds and their Lagrangian submanifolds,
particularly concern is for the homogeneous NK $\mathbb{S}^3\times
\mathbb{S}^3$. In section 3, a proof of Theorem \ref{prop:1.1} is
given. In section 4, restricting to the homogeneous NK
$\mathbb{S}^3\times \mathbb{S}^3$, we first derive an equivalent
statement of the $J$-isotropic condition, then using Ricci identity
we prove Proposition \ref{prop:4.2} which becomes crucial for our
purpose. In section 5, we discuss the examples $f_7$ and $f_8$ as
described in the Main Theorem and show that both of them are
$J$-parallel. Finally in section 6, we complete the proof of the
Main Theorem.

\vskip 2mm

\noindent{\bf Acknowledgements}. We are greatly indebted to Prof.
Luc Vrancken for his very helpful suggestions as well as his
valuable comments during the time when we were working on this
project.


\section{Preliminaries}

\subsection{\bf The homogeneous nearly K\"ahler $\mathbb{S}^3\times \mathbb{S}^3$}

In this subsection, we recall from \cite{B-D-D-V} the homogeneous NK
structure on $\mathbb{S}^3\times \mathbb{S}^3$. By the natural
identification $T_{(p,q)}(\mathbb{S}^3 \times \mathbb{S}^3)\cong
T_p\mathbb{S}^3 \oplus T_q\mathbb{S}^3$, we can write a tangent
vector at $(p,q)$ as $Z(p,q)=(U_{(p,q)},V_{(p,q)})$, or $Z=(U,V)$
for simplicity. The well known almost complex structure $J$ on
$\mathbb{S}^3\times \mathbb{S}^3$ is given by
\begin{equation}\label{eqn:2.1}
J Z(p,q) = \tfrac{1}{\sqrt{3}} (2pq^{-1}V - U, -2qp^{-1}U + V).
\end{equation}

Define the following Hermitian metric $g$
\begin{equation}\label{eqn:2.2}
\begin{split}
g(Z, Z')& = \tfrac{1}{2}(\langle Z,Z'\rangle  + \langle JZ, JZ' \rangle )\\
& = \tfrac{4}{3}(\langle U,U' \rangle + \langle V,V' \rangle)
   - \tfrac{2}{3}(\langle p^{-1}U, q^{-1}V' \rangle + \langle p^{-1}U', q^{-1}V
   \rangle),
\end{split}
\end{equation}
where $Z=(U, V)$, $ Z'=(U', V')$ are tangent vectors and
$\langle\cdot,\cdot\rangle$ is the standard product metric on
$\mathbb{S}^3 \times \mathbb{S}^3$. Then $(\mathbb{S}^3 \times
\mathbb{S}^3,g,J)$ is a homogeneous NK manifold, i.e.,
$(\tilde\nabla_ZJ)Z=0$ holds for any $X\in T_{(p,q)}(\mathbb{S}^3
\times \mathbb{S}^3)$, where $\tilde\nabla$ is the Levi-Civita
connection of $g$.

As usual we denote the tensor field $\tilde \nabla J$ by $G$.
Straightforward computations show that $G$ have the following
properties (cf. \cite{B-D-D-V}):
\begin{gather}
G(X,Y)+G(Y,X)=0,\label{eqn:2.3}\\
G(X,JY)+JG(Y,X)=0,\label{eqn:2.4}\\
g(G(X,Y),Z)+g(G(X,Z),Y)=0,\label{eqn:2.5}\\
(\tilde\nabla_X
G)(Y,Z)=\tfrac{1}{3}(g(Y,JZ)X+g(X,Z)JY-g(X,Y)JZ.\label{eqn:2.6}
\end{gather}

On $\mathbb{S}^3 \times \mathbb{S}^3$, an almost product structure
$P$ can be defined by (cf. \cite{B-D-D-V})
\begin{equation}\label{eqn:2.7}
PZ = (pq^{-1}V, qp^{-1}U),\ \
 \forall~ Z = (U,V)
\in T_{(p,q)}(\mathbb{S}^3 \times \mathbb{S}^3).
\end{equation}

It is easily seen that the operator $P$ is compatible and symmetric
with respect to $g$, and it is anti-commutative with $J$. We
particularly mention that $P$ plays an important role in the study
of submanifolds of the homogeneous NK $\mathbb{S}^3 \times
\mathbb{S}^3$. The following formula allows to express $\tilde
\nabla P$ in terms of $P$ and $G$ (cf. \cite{B-D-D-V}):
\begin{equation}\label{eqn:2.8}
JG(X,PY)+JPG(X,Y)=2\DP{X}Y.
\end{equation}

By definition, the above useful relation immediately yields
\begin{equation}\label{eqn:2.9}
-G(X,PY)+PG(X,Y)=2\DPJ{X}Y.
\end{equation}

The curvature tensor $\tilde R$ of the homogeneous NK $\mathbb{S}^3 \times
\mathbb{S}^3$ is given by (cf. \cite{B-D-D-V}):
\begin{equation}\label{eqn:2.10}
\begin{split}
\tilde{R}(X,Y)Z =& \tfrac{5}{12}\big(g(Y,Z)X - g(X,Z)Y\big)\\
 &+ \tfrac{1}{12}\big(g(JY,Z)JX - g(JX,Z)JY - 2g(JX,Y)JZ\big)\\
 &+ \tfrac{1}{3}\Big(g(PY,Z)PX - g(PX,Z)PY\\
 &\qquad + g(JPY,Z)JPX - g(JPX,Z)JPY\Big).
\end{split}
\end{equation}


\subsection{\bf Lagrangian submanifolds of the NK $\mathbb{S}^3 \times \mathbb{S}^3$}

Let $M$ be a Lagrangian submanifold of the homogeneous NK $\mathbb{S}^3 \times \mathbb{S}^3$,
and therefore $JTM=T^\bot M$. By a result of Sch\"afer and Smoczyk in \cite{S-S}
(see also \cite{G-I-P}), $M$ is orientable and minimal, and that $G(X,Y)$ is
a normal vector for any $X,Y \in TM$. We denote by $\nabla$ and $\nabla^\bot$
respectively the induced connection and normal connection on $M$. Then the
formulas of Gauss and Weingarten can be expressed as
\begin{equation}\label{eqn:2.11}
\begin{split}
\tilde \nabla_X Y&=\nabla_X Y + h(X,Y),\ \ \forall X,Y\in TM,\\
\tilde \nabla_X \xi&=- A_\xi X + \nabla^\bot _X \xi,\ \ \forall X\in
TM,\ \xi\in T^\perp M,
\end{split}
\end{equation}
where $h$ is the second fundamental form, and it is related to the
shape operator $A_\xi$ by
$
g(h(X,Y),\xi)=g(A_\xi X,Y).
$
From \eqref{eqn:2.11} and the property of $G$, we derive
\begin{equation}\label{eqn:2.12}
\nabla^\bot_X JY=G(X,Y) + J\nabla_X Y, \ \ A_{JX}Y=-J h(X,Y).
\end{equation}
Clearly, the second formula in \eqref{eqn:2.12} shows that
$g(h(X,Y),JZ)$ is totally symmetric.

Denote the curvature tensor of $\nabla$ and $\nabla^\bot$ by $R$ and
$R^\bot$, respectively. Then the equations of Gauss, Codazzi and
Ricci are given by
\begin{gather}
R(X,Y,Z,W)=\tilde R(X,Y,Z,W)+ g(h(X,W),h(Y,Z))-g(h(X,Z),h(Y,W)),\label{2.13}\\
g((\nabla h)(X,Y,Z)-(\nabla h)(Y,X,Z),\xi)=g(\tilde R(X,Y)Z,\xi),\label{2.14}\\
g(R^\bot(X,Y)\xi,\eta)=g(\tilde R(X,Y)\xi,\eta)+g([A_\xi A_\eta]X,Y),\label{2.15}
\end{gather}
where $(\nabla h)(X,Y,Z)=\nabla^\bot_X
h(Y,Z)-h(\nabla_XY,Z)-h(Y,\nabla_X Z)$.

Applying the equations of Gauss and Ricci, and using \eqref{eqn:2.10}
as well as \eqref{eqn:2.12}, we find
\begin{equation}\label{eqn:2.16}
g(R(X,Y)Z,W)=
g(R^\bot(X,Y)JZ,JW)+\tfrac{1}{3}\big(g(X,W)g(Y,Z)-g(X,Z)g(X,W)\big).
\end{equation}

Related to $\nabla h$, we also have the following useful formula
(cf. Lemma 2.3 in \cite{Z-D-H-V-W}):
\begin{equation}\label{eqn:2.17}
g((\nabla h)(X,Y,Z),JW)-g((\nabla h)(X,Y,W),JZ)= g(h(X,Y),G(W,Z)).
\end{equation}

Define the second covariant derivative $\nabla^2 h$ by
\begin{equation*}
\begin{split}
(\nabla^2 h)(X,Y,Z,W)=&\nabla^\bot_X ((\nabla h)(Y,Z,W))-(\nabla h)(\nabla_X Y,Z,W)\\
                      & -(\nabla h)(Y,\nabla_X Z,W)-(\nabla h)(Y, Z,\nabla_X W).
\end{split}
\end{equation*}
Then $\nabla^2 h$ satisfies the Ricci identity
\begin{equation}\label{eqn:2.18}
\begin{split}
(\nabla^2 h)&(X,Y,Z,W)-(\nabla^2 h)(Y,X,Z,W)\\&=
R^\bot(X,Y)h(Z,W)-h(R(X,Y)Z,W)-h(Z,R(X,Y)W).
\end{split}
\end{equation}

Since $M$ is Lagrangian, the pull-back of~$T(\mathbb{S}^3\times
\mathbb{S}^3)$ to~$M$ splits into~ $TM\oplus JTM$. We can consider
on a dense open set of $M$, and choose an orthonormal frame field
$\{e_1,e_2,e_3\}$ such that (cf. \cite{D-V-W}, p.8)
\begin{equation}\label{eqn:2.19}
Pe_1=\lambda_1e_1+\mu_1Je_1,\
Pe_2=\lambda_2e_2+\mu_2Je_2,\
Pe_3=\lambda_3e_3+\mu_3Je_3,
\end{equation}
where $\lambda_i=\cos 2\theta_i,\ \mu_i=\sin 2\theta_i,\ i=1,2,3.$
The functions $\theta_1,\theta_2$ and $\theta_3$ are called the
angles of $M$. Taking into account of the properties of $G$, we can
further assume
$$
\sqrt{3}JG(e_i,e_j)=\sum_k\e_{ij}^ke_k,
$$
%
where $\e_{ij}^k$ is the Levi-Civita symbol, i.e.,
$$
\e_{ij}^k: =\left\{
\begin{aligned}
& 1, \qquad\,  {\rm if}\ (ijk)\  {\rm is\ an\ even\ permutation\ of\ (123)},\\
&-1,\quad  {\rm if}\ (ijk)\ {\rm is\ an\ odd\ permutation\ of\ (123)},\\
& 0, \ \qquad {\rm otherwise}.
\end{aligned}
\right.
$$

Denote by $\omega_{ij}^k$ the coefficient of induced connection,
determined by
$$
\nabla_{E_i}E_j=\sum_k\omega_{ij}^k E_k,\quad
\omega_{ij}^k=-\omega_{ik}^j.
$$

In \cite{D-V-W}, applying the properties of $P$, it was proved that
the angle functions $\{\theta_1,\theta_2,\theta_3\}$ satisfy the
following important relations:
\begin{lemm}[\cite{D-V-W}]\label{lem:1}
Let $M$ be Lagrangian submanifold of the homogeneous NK $\mathbb{S}^3 \times \mathbb{S}^3$.
With respect to the above chosen frame field $\{e_1,e_2,e_3\}$, we have
\begin{enumerate}
 \item $\theta_1 + \theta_2 + \theta_3$ is a multiple of $\pi$,
 \item $ e_i(\theta_j) = -h_{jj}^i$,
 \item $ h_{ij}^k \cos(\theta_j - \theta_k)
 = \Bigl(\frac{\sqrt{3}}{6}\e_{ij}^k - \omega_{ij}^k \Bigr)\sin(\theta_j -
\theta_k),\ \forall\,j\not=k$,
\end{enumerate}
where $h_{ij}^k=g(h(e_i,e_j),Je_k)$.
\end{lemm}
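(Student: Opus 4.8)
The plan is to read off all three assertions from a single computation: expand the defining relations \eqref{eqn:2.19} covariantly and match the outcome against the structural identity \eqref{eqn:2.8}. Concretely, I would compute $\DP{e_i}e_j=\tilde\nabla_{e_i}(Pe_j)-P\tilde\nabla_{e_i}e_j$ by differentiating $Pe_j=\lambda_je_j+\mu_jJe_j$ and inserting the Gauss--Weingarten formulas \eqref{eqn:2.11}--\eqref{eqn:2.12}. Writing $h(e_i,e_j)=\sum_l h_{ij}^l Je_l$ and $\nabla_{e_i}e_j=\sum_l\omega_{ij}^l e_l$, one gets $\tilde\nabla_{e_i}(Je_j)=-\sum_l h_{ij}^l e_l+\sum_l(\omega_{ij}^l-\tfrac1{\sqrt3}\e_{ij}^l)Je_l$ from \eqref{eqn:2.12} together with $G(e_i,e_j)=-\tfrac1{\sqrt3}\sum_l\e_{ij}^l Je_l$, and hence an explicit expansion of $\DP{e_i}e_j$ in the basis $\{e_l,Je_l\}$. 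On the other hand \eqref{eqn:2.8} expresses $2\DP{e_i}e_j$ through $JG(e_i,Pe_j)$ and $JPG(e_i,e_j)$, both of which are immediately computable in the frame. The proof then reduces to comparing the $e_l$- and $Je_l$-coefficients of the two expressions.

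Carrying this out, I would first record $G(e_i,Je_j)=-JG(e_i,e_j)=-\tfrac1{\sqrt3}\sum_l\e_{ij}^l e_l$ and $P(Je_l)=\mu_l e_l-\lambda_l Je_l$; fixing the sign of $G(e_i,Je_j)$ correctly (it must be consistent with the skew-symmetry \eqref{eqn:2.5}) is the one delicate bookkeeping point. Matching the diagonal terms $l=j$ yields $e_i(\lambda_j)=2\mu_j h_{ij}^j$ and $e_i(\mu_j)=-2\lambda_j h_{ij}^j$; since $\lambda_j=\cos2\theta_j,\ \mu_j=\sin2\theta_j$ (so the two cases $\mu_j=0$ and $\lambda_j=0$ are covered), and $h_{ij}^j=h_{jj}^i$ by total symmetry of $h^{\cdot}_{\cdot\cdot}$, these give assertion (2), $e_i(\theta_j)=-h_{jj}^i$. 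Matching the off-diagonal terms $l=k\neq j$ produces two equations in $h_{ij}^k$ and $\omega_{ij}^k$ whose coefficient matrix has determinant $\lambda_j^2-\lambda_k^2+\mu_j^2-\mu_k^2=0$; the two are therefore proportional, and their common content, after rewriting $\lambda_j\pm\lambda_k,\ \mu_j\pm\mu_k$ via the product-to-sum formulas and cancelling the factor $\sin(\theta_j+\theta_k)$, is exactly assertion (3). The automatic consistency of these two off-diagonal equations is a useful sanity check that no sign has slipped.

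Assertion (1) requires a separate argument, and this is the step I expect to be the real obstacle: the first two assertions constrain only derivatives and off-diagonal components, and say nothing about the value of $\theta_1+\theta_2+\theta_3$. Here I would invoke the pointwise compatibility of the almost product structure with $G$ special to the homogeneous \nks, namely $G(PX,PY)=-PG(X,Y)$; evaluating it on $e_i,e_j$ with $\{i,j,l\}=\{1,2,3\}$ and comparing $e_l$- and $Je_l$-components forces $\cos(2\theta_i+2\theta_j)=\cos2\theta_l$ and $\sin(2\theta_i+2\theta_j)=-\sin2\theta_l$, that is $2(\theta_1+\theta_2+\theta_3)\in2\pi\mathbb{Z}$, which is assertion (1). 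As an independent confirmation that the sum is at least locally constant, assertion (2) combined with the minimality of $M$ (so $\sum_j h(e_j,e_j)=0$) gives $e_i(\theta_1+\theta_2+\theta_3)=-\sum_j h_{jj}^i=0$. Throughout, everything lives on the dense open set where the frame \eqref{eqn:2.19} is smoothly defined, so a closing remark is needed to extend the identities by continuity across the points where two angles coincide or where $\sin(\theta_j+\theta_k)$ vanishes.
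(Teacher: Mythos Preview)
The paper does not prove this lemma at all; it is quoted from \cite{D-V-W} with only the remark that it follows ``applying the properties of $P$''. So there is no in-paper argument to compare against, and your proposal in fact reconstructs the standard proof from the cited reference: covariantly differentiate the diagonalisation \eqref{eqn:2.19} via Gauss--Weingarten, equate with the right-hand side of \eqref{eqn:2.8}, and read off (2) from the $e_j$-- and $Je_j$--components and (3) from the $e_k$-- and $Je_k$--components ($k\neq j$); then derive (1) from the purely algebraic identity $G(PX,PY)=-PG(X,Y)$, which is established in \cite{B-D-D-V} (Lemma~2.2 there) although it is not reproduced among \eqref{eqn:2.3}--\eqref{eqn:2.9} here. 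Your sign bookkeeping is right --- in particular the relation $G(X,JY)=-JG(X,Y)$ that you use is the correct one, and your derivation of $\cos 2(\theta_i+\theta_j)=\cos 2\theta_l$, $\sin 2(\theta_i+\theta_j)=-\sin 2\theta_l$ goes through exactly as you describe.

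Two minor points worth tightening in a full write-up. First, in the off-diagonal step the two dependent equations differ by which of $\sin(\theta_j+\theta_k)$ or $\cos(\theta_j+\theta_k)$ appears as the overall factor, so rather than saying ``cancel $\sin(\theta_j+\theta_k)$'' you should display one explicit linear combination that manifestly yields (3) without dividing; this also removes the need for a separate continuity argument at the zeros of that factor. Second, the identity $G(PX,PY)=-PG(X,Y)$ is not among the relations the present paper records, so you should either cite \cite{B-D-D-V} for it or derive it in one line from the explicit formulas \eqref{eqn:2.1} and \eqref{eqn:2.7}; your proof of (1) hinges on it, and the differential consequence $e_i(\theta_1+\theta_2+\theta_3)=0$ that you mention is only a consistency check, not a substitute.
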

It is worth noting that the angle functions $\{\theta_i\}_{i=1}^3$ play an important
role in the study of Lagrangian submanifolds of $\mathbb{S}^3 \times \mathbb{S}^3$.
In fact, by carefully analyzing the angle functions, a complete
classification of the Lagrangian manifolds in the homogeneous NK $\mathbb{S}^3 \times \mathbb{S}^3$
with constant sectional curvature was obtained in \cite{D-V-W}. Also in \cite{D-V-W}, the
authors established the following interesting characterization of totally geodesic Lagrangian
submanifolds in terms of the angle functions.
\begin{lemm}[Lemma 3.8 of \cite{D-V-W}]\label{lem:2}
If two of the angles $\{\theta_1,\theta_2,\theta_3\}$ are equal
modulo $\pi$, then the Lagrangian submanifold is totally geodesic.
\end{lemm}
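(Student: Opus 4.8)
The plan is to work entirely within the adapted frame $\{e_1,e_2,e_3\}$ of \eqref{eqn:2.19} and to show that the hypothesis forces every component $h_{ij}^k=g(h(e_i,e_j),Je_k)$ to vanish. Recall that $h_{ij}^k$ is totally symmetric and that minimality gives $\sum_i h_{ii}^k=0$ for each $k$. Working on a connected neighborhood and relabelling by a \emph{cyclic} permutation of the frame (which is even, hence preserves both the normalization $\sqrt3\,JG(e_i,e_j)=\sum_k\e_{ij}^ke_k$ and the statement of Lemma \ref{lem:1}(3) verbatim), I may assume $\theta_2\equiv\theta_3\pmod\pi$. Then $\theta_2-\theta_3$ is a constant multiple of $\pi$, so $\sin(\theta_2-\theta_3)=0$ while $\cos(\theta_2-\theta_3)=\pm1\neq0$.

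First I would reduce the second fundamental form to a single unknown function. Applying Lemma \ref{lem:1}(3) to the pair $(j,k)=(2,3)$ and dividing by the nonzero $\cos(\theta_2-\theta_3)$ yields $h_{i2}^3=0$ for $i=1,2,3$, i.e. $h_{123}=h_{223}=h_{233}=0$ in symmetric notation. Since $\theta_2\equiv\theta_3$, Lemma \ref{lem:1}(2) gives $e_i(\theta_2)=e_i(\theta_3)$, hence $h_{i22}=h_{i33}$ for all $i$; together with the previous vanishings this forces $h_{222}=h_{333}=0$ and $h_{122}=h_{133}=:a$. Finally, minimality gives $h_{112}=h_{113}=0$ and $h_{111}=-2a$, so that $h$ is completely determined by the single function $a$, all remaining components being zero. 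From Lemma \ref{lem:1}(2) this also records $e_1(\theta_2)=-a$ and $e_2(\theta_2)=e_3(\theta_2)=0$.

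Next I would kill $a$, splitting on whether the third angle agrees. If moreover $\theta_1\equiv\theta_2\pmod\pi$, then Lemma \ref{lem:1}(3) for $(j,k)=(1,2)$ with $i=2$ reads $h_{212}\cos(\theta_1-\theta_2)=(\tfrac{\sqrt3}{6}\e_{21}^2-\omega_{21}^2)\sin(\theta_1-\theta_2)$, whose right-hand side vanishes since $\sin(\theta_1-\theta_2)=0$; as the cosine is nonzero we get $a=h_{122}=0$ immediately. The interesting case is $\theta_1\not\equiv\theta_2$, where $s:=\sin(\theta_1-\theta_2)\neq0$. Here I would extract two connection coefficients from Lemma \ref{lem:1}(3): taking $(i,j,k)=(2,3,1)$ gives $h_{23}^1=h_{123}=0$ with $\e_{23}^1=1$, whence $\omega_{23}^1=\tfrac{\sqrt3}{6}$; taking $(i,j,k)=(3,2,1)$ gives $h_{32}^1=0$ with $\e_{32}^1=-1$, whence $\omega_{32}^1=-\tfrac{\sqrt3}{6}$. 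The crux is then to evaluate $[e_2,e_3]\theta_2$ in two ways: directly $[e_2,e_3]\theta_2=e_2(e_3\theta_2)-e_3(e_2\theta_2)=0$, while using torsion-freeness $[e_2,e_3]=\sum_k(\omega_{23}^k-\omega_{32}^k)e_k$ and keeping only the nonzero $e_1(\theta_2)$ gives $(\omega_{23}^1-\omega_{32}^1)\,e_1(\theta_2)=\tfrac{\sqrt3}{3}(-a)$. Comparing forces $a=0$, hence $h=0$ and $M$ is totally geodesic.

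I expect the only real difficulty to be the bookkeeping: carrying out the reduction of $h$ to the single parameter $a$ and reading the signs $\e_{23}^1=1$, $\e_{32}^1=-1$ correctly, together with the justification that the reduction to $\theta_2\equiv\theta_3$ costs nothing because a cyclic relabelling leaves the $\e_{ij}^k$ normalization untouched. The genuinely delicate conceptual point is to notice that the off-diagonal coefficients $\omega_{23}^1,\omega_{32}^1$ are pinned down by Lemma \ref{lem:1}(3) precisely because they are paired with the vanishing component $h_{123}$, whereas the naive pair $(1,2)$ only expresses $\omega_{21}^2$ in terms of $a$ and yields no constraint; it is this observation that makes the commutator identity $[e_2,e_3]\theta_2=0$, rather than a direct substitution, the efficient route to $a=0$.
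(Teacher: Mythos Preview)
Your argument is correct. Note, however, that the present paper does not supply its own proof of this lemma: it is quoted from \cite{D-V-W} (Lemma~3.8 there) and used as a black box, so there is no in-paper proof to compare against.

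For the record, your route is sound throughout. The reduction of $h$ to the single parameter $a$ via Lemma~\ref{lem:1}(2),(3) together with minimality is clean, and the decisive step in the nontrivial case $\theta_1\not\equiv\theta_2$ --- reading off $\omega_{23}^1=\tfrac{\sqrt3}{6}$ and $\omega_{32}^1=-\tfrac{\sqrt3}{6}$ from the $(i,j,k)=(2,3,1)$ and $(3,2,1)$ instances of Lemma~\ref{lem:1}(3) (legitimate since $\theta_2\equiv\theta_3$ forces $\sin(\theta_1-\theta_3)\neq0$ as well) and feeding them into $[e_2,e_3]\theta_2=0$ --- is a valid and efficient way to force $a=0$. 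The only point worth flagging is that the bracket identity requires $e_2\theta_2$ and $e_3\theta_2$ to vanish in a neighborhood, not just at a point; but this is indeed the case, since $h_{222}=h_{223}=0$ were derived from the global hypothesis $\theta_2\equiv\theta_3$ and hold on the whole domain of the frame.
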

We remark that the totally geodesic Lagrangian submanifolds in the
homogeneous NK $\mathbb{S}^3 \times \mathbb{S}^3$ have been
classified in \cite{Z-D-H-V-W}. Actually such a Lagrangian
submanifold is locally given by one of the immersions
$\{f_i\}_{i=1}^6$ described in the Main Theorem.

\section{Proof of Theorem \ref{thm:1.2}}\label{sect:3}
To give a proof of Theorem \ref{thm:1.2}, we are sufficient to
consider the ``only if" part.

Let $M$ be an isotropic Lagrangian submanifold of a $6$-dimensional
strict NK manifold. Suppose on the contrary that the assertion is
not true, then we have a point $p\in M$ such that it is not totally
geodesic. Consider the function $F$ defined on the unit sphere
$U_pM$ in $T_pM$ by $F(v)= g(h(v,v),Jv). $ Noting that the cubic
form $g(h(\cdot,\cdot),J\cdot)$ is totally symmetric (cf.
Proposition 3.2 of \cite{S-S}), we can choose an orthonormal basis
$\{ e_1,e_2.e_3\}$ of $T_pM$ as in Lemma 1 of \cite{M-U}, such that
\begin{equation}\label{eqn:3.1}
h(e_1,e_j)=\mu_jJe_j, \ j=1,2,3,
\end{equation}
where $\mu_1$ is the maximum of the function $F$ on $U_pM$.

As $M$ satisfies the condition
\begin{equation}\label{eqn:3.2}
g(h(v,v),h(v,v))=\mu^2 (g(v,v))^2,\ \forall~ v\in TM,
\end{equation}
we derive from \eqref{eqn:3.1} and \eqref{eqn:3.2} that
$\mu_1=\mu\,\ge0$. From the relation \eqref{eqn:3.2} we also have
(see (3.2) in \cite{M-U}):
\begin{equation}\label{eqn:3.3}
\mu^2-g(h(x,x),h(y,y))-2g((h(x,y),h(x,y))=0,
\end{equation}
for all orthonormal vectors $x,y$. Now in \eqref{eqn:3.3} taking
$x=e_1,y=e_j$ for $j=2$ and $j=3$, respectively, then using
\eqref{eqn:3.1} we derive $\mu^2-\mu\mu_j-2\mu_j^2=0$. Solving this
equation for $\mu_j$, we get $\mu_j=-\mu$, or
$\mu_j=\frac{1}{2}\mu$. From Theorem A of \cite{S-S} we know that
$M$ is minimal, thus we have
$$
0=g(h(e_1,e_1)+h(e_2,e_2)+h(e_3,e_3),Je_1)=\mu+\mu_2+\mu_3,
$$
which, together with the previous relations, yields $\mu=0$. Hence,
by \eqref{eqn:3.2}, we easily derive $h(u,v)=0$ for all tangent
vector $u$ and $v$, i.e., $M$ is totally geodesic at $p$, we get the
desired contradiction. \qed


\section{Implications of the $J$-isotropic condition}

Now we assume that $M$ is a $J$-isotropic Lagrangian submanifold in
the homogeneous NK $\mathbb{S}^3 \times \mathbb{S}^3$, such that
\begin{equation}\label{eqn:4.1}
g((\nabla h)(v,v,v),Jv)=\lambda (g(v,v))^2,\ \forall~ v\in TM,
\end{equation}
where $\lambda$ is a function on $M$.

First of all, for later use we present an equivalent condition of
the $J$-isotropic property \eqref{eqn:4.1}. For notational simplicity,
in sequel we will use the symbol $\mathfrak{S}$ to denote cyclic
sum.
\begin{prop}\label{prop:4.1}
A Lagrangian submanifold $M$ of the homogeneous NK $\mathbb{S}^3 \times \mathbb{S}^3$
is $J$-isotropic if and only if the following equation holds:
\begin{equation}\label{eqn:4.2}
\begin{split}
&12g((\nabla h)(Y,Z,W),JV)+ 3\mathop{\mathfrak{S}}_{YZW}g(h(Y,Z),G(W,V)) \\&
     +2\mathop{\mathfrak{S}}_{ZWV}[g(PY,Z)g(PJW,V)-g(PJY,Z)g(PW,V)]\\&
     -4\lambda\mathop{\mathfrak{S}}_{ZWV}g(Y,Z)g(W,V)
     =0,
\end{split}
\end{equation}
where $Y,Z,W,V$ are any vector fields tangent to $M$.
\end{prop}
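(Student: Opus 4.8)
The plan is to prove the equivalence by polarization of the defining identity \eqref{eqn:4.1}. The cubic form $g((\nabla h)(\cdot,\cdot,\cdot),J\cdot)$ appearing on the left of \eqref{eqn:4.1} is a quartic form in the single vector $v$, so the standard route is to replace $v$ by $Y+Z+W+V$, expand, and collect the fully-symmetrized ($4$-linear) part. The subtlety is that $(\nabla h)(X,Y,Z)$ is symmetric in the last two arguments but only commutes in the first two up to curvature (Codazzi, \eqref{2.14}), and likewise $g((\nabla h)(Y,Z,W),JV)$ is not totally symmetric: the failure of symmetry is governed precisely by \eqref{2.14} and by \eqref{eqn:2.17}. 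So first I would record, from \eqref{2.14} and \eqref{eqn:2.17} together with the curvature formula \eqref{eqn:2.10}, exactly how $g((\nabla h)(Y,Z,W),JV)$ changes under the three transpositions generating $S_4$; each swap produces a "$G$-term" of the shape $g(h(\cdot,\cdot),G(\cdot,\cdot))$ and a curvature term built from $g$, $J$, $P$, $JP$.

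Next I would carry out the polarization itself. Writing $\Phi(Y,Z,W,V):=g((\nabla h)(Y,Z,W),JV)$, the fully polarized identity obtained from \eqref{eqn:4.1} reads
\begin{equation*}
\sum_{\sigma\in S_4}\Phi(\sigma)=\lambda\sum_{\sigma\in S_4}g(\,\cdot\,,\,\cdot\,)g(\,\cdot\,,\,\cdot\,),
\end{equation*}
i.e. the symmetrization of $\Phi$ equals $\lambda$ times the symmetrization of $g\otimes g$. Using the symmetry of $\Phi$ in its last two slots and the transposition rules from the previous step, I would reduce the $24$-term sum on the left to a multiple of $\Phi(Y,Z,W,V)$ (the factor $12$ in \eqref{eqn:4.2} is $24/2$, reflecting that $\Phi$ already has a $\mathbb{Z}/2$ symmetry) plus correction terms. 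The $G$-corrections, after using \eqref{eqn:2.17} to convert all residual $(\nabla h)$-transpositions into $g(h(\cdot,\cdot),G(\cdot,\cdot))$ and using the skew-symmetry \eqref{eqn:2.3}, collapse to the cyclic sum $3\,\mathfrak{S}_{YZW}g(h(Y,Z),G(W,V))$. The curvature corrections: the isotropic-curvature parts $\tfrac{5}{12}$ and $\tfrac{1}{12}$ of \eqref{eqn:2.10} contribute terms proportional to $g\otimes g$ and to $g(J\cdot,\cdot)g(J\cdot,\cdot)$, but by minimality/Lagrangianness and the total symmetry of $g(h(\cdot,\cdot),J\cdot)$ the $J$-terms should symmetrize away or recombine into $g\otimes g$; after absorbing these into the $-4\lambda\,\mathfrak{S}_{ZWV}g(Y,Z)g(W,V)$ term (note $\mathfrak{S}$ of a symmetric biquadratic reproduces the right combinatorial weight), only the $P$-part of \eqref{eqn:2.10} survives as a genuinely new term, giving exactly $2\,\mathfrak{S}_{ZWV}[g(PY,Z)g(PJW,V)-g(PJY,Z)g(PW,V)]$ once one uses $PJ=-JP$ and the symmetry of $P$. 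For the converse direction, I would simply set $Y=Z=W=V=v$ in \eqref{eqn:4.2}: then $\mathfrak{S}_{YZW}g(h(Y,Z),G(W,V))$ vanishes because $G(v,v)=0$, the $P$-bracket vanishes since $g(Pv,v)g(PJv,v)=g(PJv,v)g(Pv,v)$, and we are left with $12g((\nabla h)(v,v,v),Jv)-4\lambda\cdot 3 (g(v,v))^2=0$, i.e. \eqref{eqn:4.1}.

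The main obstacle I anticipate is bookkeeping in the curvature corrections: when the argument $v$ is split, the Codazzi term $g(\tilde R(Y,Z)W,JV)$ appears with various index placements, and one must verify that the $J$-type contractions $g(J\cdot,\cdot)$ coming from the $\tfrac{1}{12}$-block of \eqref{eqn:2.10} genuinely reorganize—via the total symmetry of $g(h(\cdot,\cdot),J\cdot)$ and the first Bianchi identity for $\tilde R$—into the metric form so that no stray $J\otimes J$ term remains in \eqref{eqn:4.2}. This is where the ``complicated expression of the Riemannian curvature tensor'' alluded to in the introduction bites; the $P$-block is comparatively clean since it is already $g$-symmetric in the relevant pairs. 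Once those cancellations are checked, the identification of coefficients $12$, $3$, $2$, $-4$ is forced, and both implications follow. I would present the forward direction as the substantive computation and dispatch the converse in two lines as above.
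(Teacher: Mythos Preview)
Your proposal is correct and follows essentially the same route as the paper: polarize \eqref{eqn:4.1} (the paper substitutes $v=a_1Y+a_2Z+a_3W+a_4V$ and reads off the $a_1a_2a_3a_4$-coefficient, obtaining \eqref{eqn:4.3}), then use Codazzi \eqref{2.14} and the swap identity \eqref{eqn:2.17} repeatedly to collapse the twelve-term sum to $12\,g((\nabla h)(Y,Z,W),JV)$ plus $G$-terms and curvature terms, and finally insert \eqref{eqn:2.10}; the converse is dispatched exactly as you say.

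One comment: the ``main obstacle'' you anticipate is a non-issue. Because $M$ is Lagrangian, for tangent $X,Y,Z,V$ every factor of the form $g(X,JV)$ or $g(JX,Y)$ vanishes, so in $\tilde R(X,Y,Z,JV)$ both the $\tfrac{5}{12}$-block and the $\tfrac{1}{12}$-block of \eqref{eqn:2.10} vanish identically---nothing needs to ``symmetrize away'' or be absorbed into the $\lambda$-term. Only the $P$-block survives, and it gives directly the $\mathfrak{S}_{ZWV}$ expression in \eqref{eqn:4.2}. So the bookkeeping is considerably lighter than you expect; minimality plays no role at this step.
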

\begin{proof}
It is easily seen that the ``if" part is trivial.

Now, we consider the ``only if" part. Taking $v=a_1Y+a_2Z+a_3W+a_4V$
for arbitrary real numbers $\{a_1,a_2,a_3,a_4\}$ in \eqref{eqn:4.1},
and comparing the coefficient of the term $a_1a_2a_3a_4$, we obtain
\begin{equation}\label{eqn:4.3}
\begin{split}
&\mathop{\mathfrak{S}}_{YZW}g((\nabla h)(Y,Z,W),JV)
+\mathop{\mathfrak{S}}_{YZV}g((\nabla h)(Y,Z,V),JW)\\&
+\mathop{\mathfrak{S}}_{YWV}g((\nabla h)(Y,W,V),JZ)
+\mathop{\mathfrak{S}}_{ZWV}g((\nabla h)(Z,W,V),JY)\\&
-4\lambda\mathop{\mathfrak{S}}_{ZWV}g(Y,Z)g(W,V)=0.
\end{split}
\end{equation}

Applying the symmetry of $h$, and the Codazzi equation \eqref{2.14},
such as
\begin{equation}\label{eqn:4.4}
g((\nabla h)(Z,Y,W),JV)=g((\nabla h)(Y,Z,W),JV) + \tilde
R(Z,Y,W,JV),
\end{equation}
to \eqref{eqn:4.3}, we can get
\begin{equation}\label{eqn:4.5}
\begin{split}
0=&\ 3g((\nabla h)(Y,Z,W),JV)+\tilde R(Z,Y,W,JV) + \tilde
R(W,Y,Z,JV)\\&
    +3g((\nabla h)(Y,Z,V),JW)+ \tilde R(Z,Y,V,JW) + \tilde R(V,Y,Z,JW)\\&
    +3g((\nabla h)(Y,W,V),JZ) + \tilde R(W,Y,V,JZ) + \tilde R(V,Y,W,JZ)\\&
    +3g((\nabla h)(Z,W,V),JY) + \tilde R(W,Z,V,JY) + \tilde R(V,Z,W,JY)\\&
    -4\lambda\mathop{\mathfrak{S}}_{ZWV}g(Y,Z)g(W,V).
\end{split}
\end{equation}

By the use of \eqref{eqn:2.17}, we can reduce \eqref{eqn:4.5} to be
\begin{equation}\label{eqn:4.6}
\begin{split}
0=&9g((\nabla h)(Y,Z,W),JV)+3g((\nabla h)(Z,Y,W),JV)\\&
   +3\mathop{\mathfrak{S}}_{YZW}g(h(Y,Z),G(W,V))\\&
    +\tilde R(Z,Y,W,JV) + \tilde R(W,Y,Z,JV)+ \tilde R(Z,Y,V,JW) \\&
   + \tilde R(V,Y,Z,JW) + \tilde R(W,Y,V,JZ) + \tilde R(V,Y,W,JZ)\\&
  + \tilde R(W,Z,V,JY) + \tilde R(V,Z,W,JY)
  -4\lambda\mathop{\mathfrak{S}}_{ZWV}g(Y,Z)g(W,V).
\end{split}
\end{equation}

From \eqref{eqn:4.6}, and applying the Codazzi equation \eqref{eqn:4.4}
once more, we further obtain
\begin{equation}\label{eqn:4.7}
\begin{split}
&12g((\nabla h)(Y,Z,W),JV)+ 3\mathop{\mathfrak{S}}_{YZW}g(h(Y,Z),G(W,V)) \\&
     + 4 \tilde R(Z,Y,W,JV) + \tilde R(W,Y,Z,JV)
     + \tilde R(Z,Y,V,JW) \\&+ \tilde R(V,Y,Z,JW) + \tilde R(W,Y,V,JZ)
   + \tilde R(V,Y,W,JZ)\\& + \tilde R(W,Z,V,JY) + \tilde R(V,Z,W,JY)
   -4\lambda\mathop{\mathfrak{S}}_{ZWV}g(Y,Z)g(W,V)
   =0.
\end{split}
\end{equation}

From \eqref{eqn:4.7} and \eqref{eqn:2.10}, we obtain the expression
\eqref{eqn:4.2} immediately.
\end{proof}

Next, to achieve further implications of the $J$-isotropic condition
\eqref{eqn:4.1}, we differentiate the equation \eqref{eqn:4.2}. Then,
using \eqref{eqn:2.6}, we can get
\begin{equation}\label{eqn:4.8}
\begin{split}
12&\big[g((\nabla^2 h)(X,Y,Z,W),JV)+g((\nabla h)(Y,Z,W),G(X,V))\big]\\&
  +\mathop{\mathfrak{S}}_{YZW}\big[3g((\nabla h)(X,Y,Z),G(W,V))
  +g(h(Y,Z),JW)g(X,V)\\&
  -g(h(Y,Z),JV)g(X,W)\big] +2\mathop{\mathfrak{S}}_{ZWV}\Big[g(PY,h(X,Z))g(PJW,V)\\&
  +g(\DP{X}Y+Ph(X,Y),Z)g(PJW,V)+g(PY,Z)g(PJW,h(X,V))\\&
  +g(PY,Z)g(\DPJ{X}W+PJh(X,W),V)-g(PJY,h(X,Z))g(PW,V)\\&
  -g(\DPJ{X}Y+PJh(X,Y),Z)g(PW,V) -g(PJY,Z)g(PW,h(X,V))\\&
  -g(PJY,Z)g(\DP{X}W+Ph(X,W),V)-2X(\lambda)g(Y,Z)g(W,V)\Big]
  =0,
\end{split}
\end{equation}
where, besides the basic formulas \eqref{eqn:2.11} and \eqref{eqn:2.12},
we have used \eqref{eqn:4.2} for the expressions of such terms
$g((\nabla h)(\nabla_X Y,Z,W),JV), \ldots, g((\nabla h)(
Y,Z,W),J\nabla_XV)$.


From the equation \eqref{eqn:4.8}, we have the following crucial
proposition.
\begin{prop}\label{prop:4.2}
If $M$ is $J$-isotropic in the homogeneous NK $\mathbb{S}^3 \times \mathbb{S}^3$,
then we have
\begin{equation}\label{eqn:4.9}
\begin{split}
12&g(R^\bot(X,Y)h(Z,W)-h(R(X,Y)Z,W)-h(Z,R(X,Y)W),JV)\\&
+9\big[g((\nabla h)(Y,Z,W),G(X,V))-g((\nabla
h)(X_,Z,W),G(Y,V))\big]\\& +3g(h(Y,Z),JW)g(X,V)-3g(h(X,Z),JW)g(Y,V)
\\& +\mathop{\mathfrak{S}}_{\tiny ZW}\Big[
g(h(X,Z),JV)g(Y,W)-g(h(Y,Z),JV)g(X,W) \\&
+g(PY,Z)g(PX,G(W,V))-g(PX,Z)g(PY,G(W,V))\\& +
g(JPY,Z)g(JPX,G(W,V))-g(JPX,Z)g(JPY,G(W,V))\Big]\\&
+2\mathbf{I}(X,Y,Z,W,V) =0,
\end{split}
\end{equation}
where $\mathbf{I}(X,Y,Z,W,V)$ is defined by
\begin{equation}\label{eqn:4.10}
\begin{split}
\mathbf{I}&(X,Y,Z,W,V):=\mathop{\mathfrak{S}}_{ZWV}\Big[g(PY,h(X,Z))g(PJW,V)\\&
  +g(\DP{X}Y+Ph(X,Y),Z)g(PJW,V)+g(PY,Z)g(PJW,h(X,V))\\&
  +g(PY,Z)g(\DPJ{X}W+PJh(X,W),V)
  -g(PX,h(Y,Z))g(PJW,V)\\&
  -g(\DP{Y}X+Ph(X,Y),Z)g(PJW,V)-g(PX,Z)g(PJW,h(Y,V))\\&
  -g(PX,Z)g(\DPJ{Y}W+PJh(Y,W),V)
  -g(PJY,h(X,Z))g(PW,V)\\&
  -g(\DPJ{X}Y+PJh(X,Y),Z)g(PW,V) -g(PJY,Z)g(PW,h(X,V))\\&
  -g(PJY,Z)g(\DP{X}W+Ph(X,W),V)
  +g(PJX,h(Y,Z))g(PW,V)\\&
  +g(\DPJ{Y}X+PJh(X,Y),Z)g(PW,V) +g(PJX,Z)g(PW,h(Y,V))\\&
  +g(PJX,Z)g(\DP{Y}W+Ph(Y,W),V)-2X(\lambda)g(Y,Z)g(W,V)\\&
  +2Y(\lambda)g(X,Z)g(W,V)\Big].
\end{split}
\end{equation}
\end{prop}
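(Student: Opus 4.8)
The plan is to obtain \eqref{eqn:4.9} from \eqref{eqn:4.8} by
\emph{antisymmetrizing in $X$ and $Y$}. Equation \eqref{eqn:4.8} was
derived by covariantly differentiating \eqref{eqn:4.2} in the direction
$X$; the key observation is that while the left-hand side of
\eqref{eqn:4.8} is complicated, the only genuinely new ingredient it
contributes (beyond tensorial algebra already controlled by
\eqref{eqn:4.2}) is the second covariant derivative term
$g((\nabla^2 h)(X,Y,Z,W),JV)$. So I would write down \eqref{eqn:4.8}
with the roles of $X$ and $Y$ exchanged, subtract, and watch the
various pieces reorganize themselves.

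In more detail, the steps are as follows. First, in the $X\leftrightarrow Y$
difference, the two second-derivative terms combine via the Ricci
identity \eqref{eqn:2.18} into
$12\,g\big(R^\bot(X,Y)h(Z,W)-h(R(X,Y)Z,W)-h(Z,R(X,Y)W),JV\big)$, which
is exactly the first line of \eqref{eqn:4.9}. Second, the terms
$12\,g((\nabla h)(Y,Z,W),G(X,V))$ and its $X\leftrightarrow Y$ partner,
together with the three $G$-contractions
$3\,\mathfrak{S}_{YZW}g((\nabla h)(X,Y,Z),G(W,V))$ appearing
(antisymmetrized) in \eqref{eqn:4.8}, must be collapsed using the
symmetry of $h$, the Codazzi relation \eqref{eqn:4.4}, and the identity
\eqref{eqn:2.17} into the coefficient $9$ in front of
$g((\nabla h)(Y,Z,W),G(X,V))-g((\nabla h)(X,Z,W),G(Y,V))$; this is the
most delicate bookkeeping, because one has to be careful that all
$\tilde R$-terms produced by invoking \eqref{eqn:4.4} either cancel in
the antisymmetrization or are precisely the curvature terms one wants.
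Third, the plain metric terms
$g(h(Y,Z),JW)g(X,V)-g(h(Y,Z),JV)g(X,W)$ (and the $\mathfrak S_{YZW}$
of these) become, after antisymmetrizing and using that
$g(h(\cdot,\cdot),J\cdot)$ is totally symmetric, the two lines
$3\,g(h(Y,Z),JW)g(X,V)-3\,g(h(X,Z),JW)g(Y,V)
+\mathfrak S_{ZW}[g(h(X,Z),JV)g(Y,W)-g(h(Y,Z),JV)g(X,W)]$ of
\eqref{eqn:4.9}. Fourth, every term inside the big
$2\,\mathfrak S_{ZWV}[\cdots]$ bracket of \eqref{eqn:4.8} involving
$P$, $PJ$, $\tilde\nabla P$, $\tilde\nabla(PJ)$ and $X(\lambda)$ is
simply copied into $\mathbf I(X,Y,Z,W,V)$ together with its
$X\leftrightarrow Y$ negative, which is how \eqref{eqn:4.10} is
\emph{defined}; so this part requires no work beyond writing it out.
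Fifth, a handful of the $P$-quadratic terms that are \emph{not} hit by
a covariant derivative of $P$ — namely those of the form
$g(PY,h(X,Z))g(PJW,V)$ versus $g(PY,Z)g(PJW,h(X,V))$ — when combined
with the $P$-part of the curvature tensor \eqref{eqn:2.10} acting
inside the Ricci-identity line, should telescope into the
$\mathfrak S_{ZW}$ lines
$g(PY,Z)g(PX,G(W,V))-g(PX,Z)g(PY,G(W,V))
+g(JPY,Z)g(JPX,G(W,V))-g(JPX,Z)g(JPY,G(W,V))$; here one uses that
$P$ is symmetric, anticommutes with $J$, and the defining relations
\eqref{eqn:2.8}–\eqref{eqn:2.9} for $\tilde\nabla P$ in terms of $G$.

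The main obstacle I expect is exactly step two together with step
five: keeping exact track of the numerical coefficients ($12$ turning
into $9$, the factors of $3$, the single factor $4$ in front of one
$\tilde R$-term in \eqref{eqn:4.7}) while repeatedly invoking Codazzi
\eqref{eqn:4.4} and the trace-type identity \eqref{eqn:2.17}, and
making sure that the curvature terms split cleanly into the part that
is absorbed into the Ricci-identity line via the Gauss equation
\eqref{2.13} and the part that survives as the explicit $P$-terms in
\eqref{eqn:4.9}. Once this accounting is done carefully — substituting
\eqref{eqn:2.10} for every $\tilde R$ and grouping the $\frac{5}{12}$,
$\frac{1}{12}$ and $\frac{1}{3}$ pieces separately — the identity
\eqref{eqn:4.9} drops out; everything else is the routine tensorial
manipulation already rehearsed in the proof of Proposition
\ref{prop:4.1}.
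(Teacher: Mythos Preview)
Your overall strategy — antisymmetrize \eqref{eqn:4.8} in $X,Y$, use the Ricci identity for the $\nabla^2 h$ terms, and use Codazzi on the $\nabla h$ terms — is exactly the paper's approach, and steps one, three, and four are right. But step five is confused about the provenance of the $\mathfrak{S}_{ZW}$ $P$-terms in \eqref{eqn:4.9}, and this confusion bleeds into step two as well.

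Here is what actually happens. The Ricci-identity line in \eqref{eqn:4.9} involves only $R^\bot$ and $R$, the \emph{intrinsic} and \emph{normal} curvatures; it is never expanded via \eqref{eqn:2.10} and contributes no $P$-terms. Likewise, \emph{every} term in the $2\,\mathfrak{S}_{ZWV}[\cdots]$ bracket of \eqref{eqn:4.8}, antisymmetrized in $X,Y$, is placed wholesale into $\mathbf I(X,Y,Z,W,V)$ — that is how $\mathbf I$ is defined in \eqref{eqn:4.10}; nothing is peeled off. The $\mathfrak{S}_{ZW}$ $P$-terms in \eqref{eqn:4.9} come from your step two alone: when you antisymmetrize $3\,\mathfrak{S}_{YZW}g((\nabla h)(X,Y,Z),G(W,V))$, two of the three summands differ from their $X\leftrightarrow Y$ partners by $\tilde R$ via Codazzi, and the third summand (the one with $X$ and $Y$ in the last two slots of $\nabla h$) combines with the coefficient $12$ to give $9$. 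Substituting \eqref{eqn:2.10} into those two $\tilde R$ terms — the $\tfrac{5}{12}$ and $\tfrac{1}{12}$ pieces vanish because $G(W,V)$ is normal — yields exactly the $\mathfrak{S}_{ZW}$ $P$-lines. So neither the Gauss equation \eqref{2.13}, nor \eqref{eqn:2.17}, nor \eqref{eqn:2.8}--\eqref{eqn:2.9} are used in this proof; the only ingredients are the Ricci identity, the total symmetry of $g(h(\cdot,\cdot),J\cdot)$, Codazzi, and \eqref{eqn:2.10}.
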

\begin{proof}
This is a direct consequence of \eqref{eqn:4.8} and the following
Ricci identity
\begin{equation*}\begin{split}
g((\nabla^2 h)&(X,Y,Z,W),JV)-g((\nabla^2 h)(Y,X,Z,W),JV)\\&
=g(R^\bot(X,Y)h(Z,W)-h(R(X,Y)Z,W)-h(Z,R(X,Y)W),JV).
\end{split}
\end{equation*}

In fact, the totally symmetry of $g(h(\cdot,\cdot),J\cdot)$ implies
that
\begin{equation}\label{eqn:4.11}
\begin{split}
\mathop{\mathfrak{S}}_{YZW}&\big[g(h(Y,Z),JW)g(X,V)-g(h(Y,Z),JV)g(X,W)\big]\\
=&3g(h(Y,Z),JW)g(X,V)-g(h(Y,Z),JV)g(X,W)\\&
-g(h(Z,W),JV)g(X,Y)-g(h(W,Y),JV)g(X,Z),
\end{split}
\end{equation}
from which we obtain the term
\begin{equation*}
\begin{split}
3g&(h(Y,Z),JW)g(X,V)-3g(h(X,Z),JW)g(Y,V) \\&
+\mathop{\mathfrak{S}}_{\tiny ZW}\big[
g(h(X,Z),JV)g(Y,W)-g(h(Y,Z),JV)g(X,W)\big]
\end{split}
\end{equation*}
in \eqref{eqn:4.9} immediately.

Next, the Codazzi equation implies that
$$
g((\nabla h)(X,Y,Z),G(W,V))-g((\nabla h)(Y,X,Z),G(W,V))=g(\tilde
R(X,Y)Z,G(W,V)),
$$
by which, and using \eqref{eqn:2.10}, we get
\begin{equation}\label{eqn:4.12}
\begin{split}
3\mathop{\mathfrak{S}}_{ZW}&\big[g((\nabla h)(X,Y,Z),G(W,V))-g((\nabla h)(Y,X,Z),G(W,V))\big]\\
=&\mathop{\mathfrak{S}}_{ZW}\Big[g(PY,Z)g(PX,G(W,V))-g(PX,Z)g(PY,G(W,V))\\&
+ g(JPY,Z)g(JPX,G(W,V))-g(JPX,Z)g(JPY,G(W,V))\Big].
\end{split}
\end{equation}
Thus the term
\begin{equation}\label{eqn:4.13}
\begin{split}
\mathop{\mathfrak{S}}_{\tiny ZW}\big[
&g(PY,Z)g(PX,G(W,V))-g(PX,Z)g(PY,G(W,V))\\
+&g(JPY,Z)g(JPX,G(W,V))-g(JPX,Z)g(JPY,G(W,V))\big]
\end{split}
\end{equation}
in \eqref{eqn:4.9} is derived.

The remaining terms in \eqref{eqn:4.9} can be easily obtained by
direct calculations.
\end{proof}
%
\section{Examples of $J$-isotropic and non-totally geodesic}

As have been shown in \cite{D-V-W}, the two Lagrangian immersions
$f_7$ and $f_8$ into $\mathbb{S}^3 \times \mathbb{S}^3$ (see Main
Theorem for their definitions) are of constant sectional curvature
$3/16$ and zero, respectively. Moreover, both of them are not
totally geodesic.

In this section, we further show that these two immersions are the
simplest examples next to the totally geodesic. Precisely, we have
the following fact:
\begin{prop}\label{prop:5.1}
The immersions $f_7$ and $f_8$ are both $J$-isotropic with
$\lambda=0$, i.e., they are in fact $J$-parallel.
\end{prop}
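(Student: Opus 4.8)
The plan is to verify the $J$-parallel condition $g((\nabla h)(v,v,v),Jv)=0$ directly for the two explicit immersions $f_7$ and $f_8$, exploiting the fact that from \cite{D-V-W} both are already known to have constant sectional curvature (respectively $3/16$ and $0$) and to be non-totally geodesic, so their full local geometry — the angle functions, the adapted frame, the second fundamental form coefficients $h_{ij}^k$, and the connection coefficients $\omega_{ij}^k$ — is explicitly computable. First I would set up, for each $f_i$, the orthonormal frame field $\{e_1,e_2,e_3\}$ of \eqref{eqn:2.19} together with the angle functions $\theta_1,\theta_2,\theta_3$; by Lemma \ref{lem:2} these angles must be pairwise distinct modulo $\pi$ (else the submanifold would be totally geodesic), and Lemma \ref{lem:1}(1) pins them down up to the constraint $\theta_1+\theta_2+\theta_3\in\pi\mathbb{Z}$. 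Parts (2) and (3) of Lemma \ref{lem:1} then express all the $h_{ij}^k$ in terms of the $e_i(\theta_j)$ and the $\omega_{ij}^k$, so the entire second fundamental form becomes available once the angle functions are known explicitly.

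Next I would compute $(\nabla h)(e_i,e_j,e_k)$ in this frame using the definition $(\nabla h)(X,Y,Z)=\nabla^\bot_X h(Y,Z)-h(\nabla_X Y,Z)-h(Y,\nabla_X Z)$ together with the first formula in \eqref{eqn:2.12}, $\nabla^\bot_X JY=G(X,Y)+J\nabla_X Y$, which lets one differentiate the normal components $h_{ij}^k Je_k$ entirely in terms of the $e_\ell(h_{ij}^k)$, the $\omega$'s, and the structure constants $\sqrt{3}JG(e_i,e_j)=\sum_k \e_{ij}^k e_k$. The $J$-parallel identity, by Proposition \ref{prop:4.1}, is equivalent to the polarized equation \eqref{eqn:4.2} with $\lambda=0$; rather than checking the full tensorial identity, it suffices to verify the cubic condition $g((\nabla h)(v,v,v),Jv)=0$ on unit vectors, i.e. to check that the totally symmetric part of the cubic form $(i,j,k,\ell)\mapsto g((\nabla h)(e_i,e_j,e_k),Je_\ell)$ has vanishing "diagonal" components in the sense of \eqref{eqn:4.1}. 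Because the cubic form $g(h(\cdot,\cdot),J\cdot)$ is already totally symmetric, this reduces to a finite, explicit check of a handful of components $e_i(h_{jj}^j)$, $e_i(h_{jj}^k)$, etc., against the connection terms.

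An alternative, and probably cleaner, route I would keep in reserve is to bypass $\nabla h$ altogether and use the relation \eqref{eqn:2.17}, $g((\nabla h)(X,Y,Z),JW)-g((\nabla h)(X,Y,W),JZ)=g(h(X,Y),G(W,Z))$, which already encodes part of the symmetrization; combined with the Codazzi equation \eqref{2.14} and the explicit curvature tensor \eqref{eqn:2.10}, one can often reduce the $J$-parallel condition to an algebraic identity among the $h_{ij}^k$ and the angle functions alone, with no differentiation. For $f_8$ (flat, a product of CMC tori) the connection coefficients are constant in the natural coordinates, so the $e_\ell(h_{ij}^k)$ terms either vanish or are trivially computable; for $f_7$ one has the homogeneity coming from the $SU(2)$-action $u\mapsto(u\mathbf{i}u^{-1},u\mathbf{j}u^{-1})$, which forces the $h_{ij}^k$, $\omega_{ij}^k$, and $\theta_i$ to be constant, again killing all derivative terms. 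In both cases the computation collapses to checking a small system of numerical identities.

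The main obstacle is purely computational bookkeeping: assembling the correct adapted frame and the explicit values of $\theta_i$, $h_{ij}^k$, $\omega_{ij}^k$ for $f_7$ and $f_8$ from the data in \cite{D-V-W}, and then propagating these through the definition of $\nabla h$ (or through \eqref{eqn:2.17}) without sign or indexing errors — the curvature tensor \eqref{eqn:2.10} and the $P$-terms make the intermediate expressions lengthy. There is no conceptual difficulty: once the frame and structure functions are written down, the verification that $\lambda\equiv 0$ is a finite check, and the homogeneity/flatness of the two examples guarantees that all the "hard" derivative terms are either constant or zero.
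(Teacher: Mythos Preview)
Your proposal is correct and follows essentially the same approach as the paper: both pull the explicit constant data $(\theta_i,\,h_{ij}^k,\,\omega_{ij}^k)$ for $f_7$ and $f_8$ from \cite{D-V-W}, use the constancy of these coefficients to reduce $g((\nabla h)(e_i,e_j,e_k),Je_l)$ to a purely algebraic expression in the structure constants via \eqref{eqn:2.12}, and then verify the polarized identity \eqref{eqn:4.2} with $\lambda=0$ component by component. One small correction: the form $(i,j,k,\ell)\mapsto g((\nabla h)(e_i,e_j,e_k),Je_\ell)$ is quartic, not cubic, and what must vanish is its full symmetrization (equivalently \eqref{eqn:4.2}), not just the diagonal entries; the paper handles this by noting that the only nontrivial index patterns are $i=j\neq k=l$ and its permutations, since the cases with three equal indices or $\{i,j,k,l\}=\{1,2,3\}$ are automatic from $h_{jj}^i=0$.
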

\begin{proof}
According to the calculations in \cite{D-V-W}, with respect to the
frame field $\{e_1,e_2,e_3\}$ that is assumed satisfying
\eqref{eqn:2.19}, we have
\begin{equation}\label{eqn:5.1}
h_{12}^3=\tfrac{1}{4}, \ \ h_{ij}^k=0\ {\rm for\ other\ }i,j,k;\quad
\omega_{ij}^k=\tfrac{\sqrt{3}}{4}\e_{ij}^k,
\end{equation}
for the immersion $f_7$; while for the immersion $f_8$ we have
\begin{equation}\label{eqn:5.2}
h_{12}^3=-\tfrac12, \ \ h_{ij}^k=0\ {\rm for\ other\ }i,j,k; \quad
\omega_{ij}^k=0.
\end{equation}

Moreover, the angles of $f_7$ and $f_8$ are both
given by
\begin{equation}\label{eqn:5.3}
(2\theta_1,2\theta_2,2\theta_3)=(0,2\pi/3,4\pi/3).
\end{equation}

As $h_{ij}^k$ and $\omega_{ij}^k$ are constant for both immersions,
we calculate that
\begin{equation}\label{eqn:5.4}
\begin{split}
&g((\nabla_{e_i} h)(e_j,e_k),Je_l)\\
&=g(\nabla^\bot_{e_i} h(e_j,e_k),Je_l)-g( h(\nabla_{e_i}e_j,e_k),Je_l)-g( h(e_j,\nabla_{e_i}e_k),Je_l)\\
&=\sum_m\big[h_{jk}^m
g(\nabla^\bot_{e_i}Je_m,Je_l)-\omega_{ij}^mh_{mk}^l-\omega_{ik}^mh_{mj}^l\big].
\end{split}
\end{equation}

Noting that
$$
\nabla^\bot_{e_i}Je_m=G(e_i,e_m)+J\nabla_{e_i}e_m, \ \
\sqrt{3}JG(e_i,e_j)=\sum_k\e_{ij}^k e_k,
$$
\eqref{eqn:5.4} implies that
\begin{equation}\label{eqn:5.5}
\begin{split}
g((\nabla_{e_i} h)(e_j,e_k),Je_l)=\sum_m\big[h_{jk}^m
(\tfrac{1}{\sqrt{3}}\e_{mi}^l
+\omega_{im}^l)-\omega_{ij}^mh_{mk}^l-\omega_{ik}^mh_{mj}^l\big].
\end{split}
\end{equation}

To complete the proof, we next show that \eqref{eqn:4.2} holds for
$f_7$ and $f_8$ with $\lambda=0$.

In fact, using \eqref{eqn:5.5}, we see that \eqref{eqn:4.2} becomes
equivalent to the following:
\begin{equation}\label{eqn:5.6}
\begin{split}
12&\sum_m\big[h_{jk}^m (\tfrac{1}{\sqrt{3}}\e_{mi}^l
+\omega_{im}^l)-\omega_{ij}^mh_{mk}^l-\omega_{ik}^mh_{mj}^l\big]\\&
+3\big[g(h(e_i,e_j),G(e_k,e_l))+g(h(e_i,e_k),G(e_j,e_l))+g(h(e_j,e_k),G(e_i,e_l))\big]\\&
+2\big[g(Pe_i,e_j)g(PJe_k,e_l)-g(PJe_i,e_j)g(Pe_k,e_l)\\&
+g(Pe_i,e_k)g(PJe_l,e_j)-g(PJe_i,e_k)g(Pe_l,e_j)\\&
+g(Pe_i,e_l)g(PJe_j,e_k)-g(PJe_i,e_l)g(Pe_j,e_k)\big]\\&
-4\lambda\big[g(e_i,e_j)g(e_k,e_l)+g(e_i,e_k)g(e_l,e_j)+g(e_i,e_l)g(e_j,e_k)\big]=0,\
\ \forall ~i,j,k,l.
\end{split}
\end{equation}

Noticing that at least two indices of $\{i,j,k,l\}$ are the same, by
the facts \eqref{eqn:5.1}-\eqref{eqn:5.3}, we easily see that \eqref{eqn:5.6}
holds for arbitrary $\lambda$ when $\{i,j,k,l\}=\{1,2,3\}$, or three
elements of $\{i,j,k,l\}$ are equal.

Therefore, to show that $f_7$ and $f_8$ are $J$-parallel, it is
sufficient to prove that \eqref{eqn:5.6} holds for $\lambda=0$ in the
three cases: $i=j\neq k=l$, $i=k\neq j=l$ or $i=l\neq j=k$.

As the calculations for the above three cases are straightforward
and totally similar, in below we will only taking for example treat
the case $i=j\neq k=l$. In this case, the equation \eqref{eqn:5.6}
reduces to
\begin{equation}\label{eqn:5.7}
\sum_m \big[2\sqrt{3}h_{ki}^m\e_{mi}^k+12h_{ki}^m\omega_{im}^k
+\sqrt{3}h_{ki}^m\e_{ki}^m+\sin2(\theta_k-\theta_i)\big]-2\lambda=0,\
\ \forall ~i\not=k.
\end{equation}

Using the facts \eqref{eqn:5.1} and \eqref{eqn:5.2}, we further see that
\eqref{eqn:5.7} becomes equivalent to
\begin{equation}\label{eqn:5.8}
2\lambda=\tfrac{\sqrt{3}}{2}\e_{im}^k+\sin2(\theta_k-\theta_i), \
m\neq i,k.
\end{equation}

From the fact \eqref{eqn:5.3}, now it is trivial to check that
\eqref{eqn:5.8} does hold for $\lambda=0$.

This completes the proof of Proposition \ref{prop:5.1}.
\end{proof}

\section{Proof of the Main Theorem}

For simplicity consideration, let us denote
$R_{ijkl}=g(R(e_i,e_j)e_k,e_l)$.

First, we take $X=e_2$, $Y=Z=W=e_1$ and $V=e_3$ in \eqref{eqn:4.9}
to obtain
\begin{equation}\label{eqn:6.1}
\begin{split}
0=&12g(R^\bot(e_2,e_1)h(e_1,e_1)-2h(R(e_2,e_1)e_1,e_1),Je_3) + 2g(h(e_2,e_1),Je_3)\\&
  +9g((\nabla h)(e_1,e_1,e_1),G(e_2,e_3))-9g((\nabla h)(e_2,e_1,e_1),G(e_1,e_3))\\&
 +2\big[g(Pe_1,e_1)g(Pe_2,G(e_1,e_3))+ g(JPe_1,e_1)g(JPe_2,G(e_1,e_3))\big]+2\mathbf{I},
\end{split}
\end{equation}
where $\mathbf{I}=\mathbf{I}(e_2,e_1,e_1,e_1,e_3)$ is given by
\begin{equation}\label{eqn:6.2}
\begin{split}
\mathbf{I}=&g(Pe_1,h(e_2,e_3))g(PJe_1,e_1) +
g\big(\DP{e_2}e_1+Ph(e_2,e_1),e_3\big)g(PJe_1,e_1)\\&
+g(Pe_1,e_1)\big[ g(PJe_1,h(e_2,e_3))+ g(PJe_3,h(e_2,e_1)) \big]
+g(Pe_1,e_1)\cdot\\&\big[ g(\DPJ{e_2}e_1+PJh(e_2,e_1),e_3)+
g(\DPJ{e_2}e_3+PJh(e_2,e_3),e_1) \big]\\&
-g(Pe_2,h(e_1,e_3))g(PJe_1,e_1) -
g\big(\DP{e_1}e_2+Ph(e_1,e_2),e_3\big)g(PJe_1,e_1)\\&
-g(PJe_1,h(e_2,e_3))g(Pe_1,e_1)-
g\big(\DPJ{e_2}e_1+PJh(e_2,e_1),e_3\big)g(Pe_1,e_1)\\&
-g(PJe_1,e_1)\big[ g(Pe_1,h(e_2,e_3))+ g(Pe_3,h(e_2,e_1) )\big]\\&
-g(PJe_1,e_1)\big[ g(\DP{e_2}e_1+Ph(e_2,e_1),e_3)+
g(\DP{e_2}e_3+Ph(e_2,e_3),e_1) \big]\\&
+g(PJe_2,h(e_1,e_3))g(Pe_1,e_1)+g\big(\DPJ{e_1}e_2+PJh(e_1,e_2),e_3\big)g(Pe_1,e_1).
\end{split}
\end{equation}

By using \eqref{eqn:2.19} and Proposition \ref{prop:4.1}, we can
calculate that
\begin{equation}\label{eqn:6.3}
\begin{split}
-9g&((\nabla h)(e_2,e_1,e_1),G(e_1,e_3))\\
 =&~\tfrac{9}{2\sqrt{3}}g(h(e_1,e_2),G(e_1,e_2))\\&
  +\tfrac{\sqrt{3}}{2}\big[g(Pe_2,e_2)g(PJe_1,e_1)-g(PJe_2,e_2)g(Pe_1,e_1)\big]\\
 =&-\tfrac{3}{2}h_{12}^3+\tfrac{\sqrt{3}}{2}(\lambda_2\mu_1-\lambda_1\mu_2).
\end{split}
\end{equation}

Putting \eqref{eqn:6.3} into \eqref{eqn:6.1}, and using
\eqref{eqn:2.16} as well as \eqref{eqn:2.19}, we derive
\begin{equation}\label{eqn:6.4}
\begin{split}
12\big[R_{2113}(h_{11}^1&-2h_{33}^1)+R_{2123}h_{11}^2-2R_{2112}h_{12}^3\big]\\&
+\tfrac{\sqrt{3}}{6}(\lambda_1\mu_2-\lambda_2\mu_1)+\tfrac{1}{2}h_{12}^3
+2\mathbf{I} =3\sqrt{3}\lambda.
\end{split}
\end{equation}

Next, we take $X=e_2$, $Y=Z=V=e_1$ and $W=e_3$ in \eqref{eqn:4.9} to
derive
\begin{equation}\label{eqn:6.5}
\begin{split}
0=&12g(R^\bot(e_2,e_1)h(e_1,e_3)-h(R(e_2,e_1)e_1,e_3)-h(e_1,R(e_2,e_1)e_3),Je_1)\\&
+9g((\nabla h)(e_1,e_1,e_3),G(e_2,e_1)) -
3g(h(e_2,e_1),Je_3)+g(h(e_2,e_3),Je_1)\\&
+g(Pe_1,e_1)g(Pe_2,G(e_3,e_1))+ g(JPe_1,e_1)g(JPe_2,G(e_3,e_1))
+2\mathbf{I}',
\end{split}
\end{equation}
where, by definition, we can check that
$\mathbf{I}'=\mathbf{I}(e_2,e_1,e_1,e_3,e_1)=\mathbf{I}(e_2,e_1,e_1,e_1,e_3)=\mathbf{I}$
as defined by \eqref{eqn:6.2}.

By using \eqref{eqn:2.19} and Proposition \ref{prop:4.1}, we can
calculate that
\begin{equation}\label{eqn:6.6}
\begin{split}
9g&((\nabla h)(e_1,e_1,e_3),G(e_2,e_1))\\
=&-\tfrac{9}{2\sqrt{3}}g(h(e_1,e_3),G(e_1,e_3)\\&
-\tfrac{\sqrt{3}}{2}\big[g(Pe_1,e_1)g(PJe_3,e_3)-g(PJe_1,e_1)g(Pe_3,e_3)\big]\\
=&-\tfrac{3}{2}h_{12}^3-\tfrac{\sqrt{3}}{2}(\lambda_1\mu_3-\lambda_3\mu_1).
\end{split}
\end{equation}

Putting \eqref{eqn:6.6} into \eqref{eqn:6.5}, and using
\eqref{eqn:2.16} and \eqref{eqn:2.19}, we also derive
\begin{equation}\label{eqn:6.7}
\begin{split}
12&\big[R_{2113}(h_{11}^1-2h_{33}^1)+R_{2123}h_{11}^2-2R_{2112}h_{12}^3\big]\\&
-\tfrac{\sqrt{3}}{3}(\lambda_1\mu_2-\lambda_2\mu_1)-\tfrac{\sqrt{3}}{2}(\lambda_1\mu_3-\lambda_3\mu_1)
+\tfrac{1}{2}h_{12}^3+2\mathbf{I}
=0.
\end{split}
\end{equation}

Now from \eqref{eqn:6.4} and  \eqref{eqn:6.7} we easily obtain the
following relation:
\begin{equation}\label{eqn:6.8}
\lambda=\tfrac{1}{6}(\lambda_1\mu_2-\lambda_2\mu_1+\lambda_1\mu_3-\lambda_3\mu_1).
\end{equation}

Similarly, by changing the roles of $e_1,e_2,e_3$ played in the
above discussions in circular order, we also have the following
relations:
\begin{gather}
\lambda=\tfrac{1}{6}(\lambda_2\mu_3-\lambda_3\mu_2+\lambda_2\mu_1-\lambda_1\mu_2),\label{eqn:6.9}\\
\lambda=\tfrac{1}{6}(\lambda_3\mu_1-\lambda_1\mu_3+\lambda_3\mu_2-\lambda_2\mu_3).\label{eqn:6.10}
\end{gather}

Now, taking the summation of \eqref{eqn:6.8}, \eqref{eqn:6.9} and
\eqref{eqn:6.10} immediately yields $\lambda=0$, which verifies the
first assertion of the Main Theorem.

To prove the remaining part of the Main Theorem, we need a more
careful calculation for the expression of $\mathbf{I}$ defined by
\eqref{eqn:6.2}. For that purpose, we make use of \eqref{eqn:2.19} to
simplify $\mathbf{I}$ as follows:
\begin{equation}\label{eqn:6.11}
\begin{split}
\mathbf{I}=&
\mu_1^2h_{12}^3+\big[g(\DP{e_2}e_1,e_3)+\mu_3h_{12}^3\big]\mu_1-\lambda_1(\lambda_1+\lambda_3)h_{12}^3\\&
+\lambda_1\big[g(\DPJ{e_2}e_1,e_3)-\lambda_3h_{12}^3+g(\DPJ{e_2}e_3,e_1)-\lambda_1h_{12}^3\big]\\&
-\mu_2\mu_1h_{12}^3-\big[g(\DP{e_1}e_2,e_3)+\mu_3h_{12}^3\big]\mu_1+\lambda_1^2h_{12}^3\\&
-\big[g(\DPJ{e_2}e_1,e_3)-\lambda_3h_{12}^3\big]\lambda_1-\mu_1(\mu_1+\mu_3)h_{12}^3\\&
-\mu_1\big[g(\DP{e_2}e_1,e_3)+\mu_3h_{12}^3+g(\DP{e_2}e_3,e_1)+\mu_1h_{12}^3\big]\\&
-\lambda_2\lambda_1h_{12}^3+\big[g(\DPJ{e_1}e_2,e_3)-\lambda_3h_{12}^3\big]\lambda_1.
\end{split}
\end{equation}

Noting that $\lambda_i^2+\mu_i^2=1,\ i=1,2,3$, the above expression
reduces to that
\begin{equation}\label{eqn:6.12}
\begin{split}
\mathbf{I}=&-\big[1+2(\lambda_1\lambda_3+\mu_1\mu_3)+(\lambda_1\lambda_2+\mu_1\mu_2)\big]h_{12}^3\\&
 - \mu_1\big[g(\DP{e_1}e_2,e_3)+g(\DP{e_2}e_3,e_1)\big]\\&
 +\lambda_1\big[g(\DPJ{e_1}e_2,e_3)+g(\DPJ{e_2}e_3,e_1)\big].
\end{split}
\end{equation}

Moreover, by use of the formulas \eqref{eqn:2.8} and \eqref{eqn:2.9},
we have the calculations:
\begin{equation}\label{eqn:6.13}
\begin{aligned}
&g(\DP{e_1}e_2,e_3)=\tfrac{1}{2\sqrt{3}}(\lambda_2-\lambda_3),
&& g(\DP{e_2}e_3,e_1)=\tfrac{1}{2\sqrt{3}}(\lambda_3-\lambda_1),\\
&g(\DPJ{e_1}e_2,e_3)=\tfrac{1}{2\sqrt{3}}(\mu_2-\mu_3),
&&g(\DPJ{e_2}e_3,e_1)=\tfrac{1}{2\sqrt{3}}(\mu_3-\mu_1).
\end{aligned}
\end{equation}

Substituting \eqref{eqn:6.13} into \eqref{eqn:6.12} yields
\begin{equation}\label{eqn:6.14}
\begin{split}
\mathbf{I}=&-\big[1+2(\lambda_1\lambda_3+\mu_1\mu_3)+(\lambda_1\lambda_2+\mu_1\mu_2)\big]h_{12}^3\\&
-\tfrac{1}{2\sqrt{3}}\mu_1(\lambda_2-\lambda_1)+\tfrac{1}{2\sqrt{3}}\lambda_1(\mu_2-\mu_1)\\
 =&-\big[1+2(\lambda_1\lambda_3+\mu_1\mu_3)+(\lambda_1\lambda_2+\mu_1\mu_2)\big]h_{12}^3
 +\tfrac{1}{2\sqrt{3}}(\lambda_1\mu_2-\lambda_2\mu_1).
\end{split}
\end{equation}

Putting \eqref{eqn:6.14} into \eqref{eqn:6.4}, with the fact that $\lambda=0$,
we eventually obtain
\begin{equation}\label{eqn:6.15}
\begin{split}
12&\big[R_{2113}(h_{11}^1-2h_{33}^1)+R_{2123}h_{11}^2-2R_{2112}h_{12}^3\big]\\&
-2\big[1+2(\lambda_1\lambda_3+\mu_1\mu_3)+(\lambda_1\lambda_2+\mu_1\mu_2)\big]h_{12}^3\\&
+\tfrac{\sqrt{3}}{2}(\lambda_1\mu_2-\lambda_2\mu_1)+\tfrac{1}{2}h_{12}^3
=0.
\end{split}
\end{equation}


\vskip 3mm

\noindent{\bf Completion of the proof of the Main Theorem}.

If $M$ is a totally geodesic Lagrangian submanifold of the
homogeneous NK $\mathbb{S}^3 \times \mathbb{S}^3$, then it is
trivially $J$-isotropic, in that case, according to the
classification theorem of \cite{Z-D-H-V-W}, $M$ should be given by
one of immersions $\{f_i\}_{i=1}^6$.

Next, we assume that $M$ is $J$-isotropic but not totally geodesic.
We are sufficient to prove that $M$ is given by the immersion $f_7$
or $f_8$.

As we have already proved that $\lambda=0$, the relations
$\lambda_i=\cos2\theta_i$ and $\mu_i=\sin2\theta_i$ enable us to
rewrite the equations \eqref{eqn:6.8}, \eqref{eqn:6.9} and
\eqref{eqn:6.10} as below:
\begin{equation}\label{eqn:6.16}
\left\{
\begin{aligned}
&\cos(\theta_3-\theta_2)\sin(\theta_3+\theta_2-2\theta_1)=0,\\
&\cos(\theta_1-\theta_3)\sin(\theta_1+\theta_3-2\theta_2)=0,\\
&\cos(\theta_2-\theta_1)\sin(\theta_2+\theta_1-2\theta_3)=0.
\end{aligned}
\right.
\end{equation}

From \eqref{eqn:6.16}, we are sufficient to
consider the following three cases.

\vskip 1mm

{\bf Case-I.}\ \ {\it For any distinct $i, j, k$, it hold the
relations}
$$
\sin(\theta_i+\theta_j-2\theta_k)=0,\
\cos(\theta_i-\theta_j)\neq 0.
$$

In this case, we have $\theta_i+\theta_j-2\theta_k\equiv0 \mod\pi$,
for any distinct $i, j, k$. Noting that by Lemma \ref{lem:1},
$\theta_1+\theta_2+\theta_3\equiv0\mod\pi$. It follows that all the
angles $\theta_i\equiv0\mod\pi/3$. As $M$ is not totally geodesic,
then by Lemma \ref{lem:2} all the angles are different modulo $\pi$.
So after rearranging the order of the angle functions if necessary,
only the possibility as
$(\theta_1,\theta_2,\theta_3)=(0,\pi/3,2\pi/3)$ can occur, and
therefore we have
$$
\lambda_1=1,\ \lambda_2=\lambda_3=-1/2,\ \ \mu_1=0,\
\mu_2=-\mu_3=\sqrt{3}/2.
$$

Since all the angles functions are constant, from Lemma \ref{lem:1}
we see that $h_{jj}^i=0$ for all $i,j$. Substituting these relations
into \eqref{eqn:6.15}, we get
\begin{equation}\label{eqn:6.17}
\begin{split}
0=&-24 R_{1221}h_{12}^3+\tfrac{1}{2}h_{12}^3+\tfrac{\sqrt{3}}{2}(\lambda_1\mu_2-\lambda_2\mu_1)\\&
 -2h_{12}^3\big[1+2(\lambda_1\lambda_3+\mu_1\mu_3)+(\lambda_1\lambda_2+\mu_1\mu_2)\big]\\
 =&-24R_{1221}h_{12}^3+\tfrac{3}{2}h_{12}^3+\tfrac{3}{4}.
\end{split}
\end{equation}

Using the following Gauss equation
\begin{equation}\label{eqn:6.18}
R_{2112}=\tfrac{5}{12}+\tfrac{1}{3}(\lambda_1\lambda_2+\mu_1\mu_2)-(h_{12}^3)^2
      =\tfrac{1}{4}-(h_{12}^3)^2,
\end{equation}
we can rewrite \eqref{eqn:6.17} as
\begin{equation}\label{eqn:6.19}
32(h_{12}^3)^3-6h_{12}^3+1=0.
\end{equation}

The above equation for $h_{12}^3$ has exactly only two different
solutions, i.e. $h_{12}^3=1/4$ or $h_{12}^3=-1/2$. Then the Gauss
equations give that
\begin{equation}\label{eqn:6.20}
\begin{split}
R_{ijji}=\tfrac{5}{12}+\tfrac{1}{3}(\lambda_i\lambda_j+\mu_i\mu_j)-(h_{12}^3)^2
=\tfrac{1}{4}-(h_{12}^3)^2,\ \ \forall ~i\neq j.
\end{split}
\end{equation}

If $h_{12}^3=1/4$, then
$M$ has constant sectional curvature $3/16$. By Theorem 5.3 of
\cite{D-V-W}, $M$ is locally given by the immersion $f_7$.

If $h_{12}^3=-1/2$, then $M$ is flat. By Theorem 5.4 of
\cite{D-V-W}, $M$ is locally given by the immersion $f_8$.

\vskip 1mm

{\bf Case-II.}\ \ {\it In $\{\cos(\theta_2-\theta_1),
\cos(\theta_3-\theta_2),\cos(\theta_1-\theta_3)\}$ exactly one is
zero, say
$$
\cos(\theta_2-\theta_1)=0,\ \cos(\theta_3-\theta_2)\neq0,\ \cos(\theta_1-\theta_3)\neq0,
$$
and thus
$\sin(\theta_3+\theta_2-2\theta_1)=\sin(\theta_1+\theta_3-2\theta_2)=0$.}

\vskip 1mm

In this case, we have
\begin{equation}\label{eqn:6.21}
\begin{aligned}
\theta_1-\theta_2&\equiv\tfrac\pi2\mod \pi,\\
\theta_3+\theta_2-2\theta_1&\equiv0\mod\pi,\\
\theta_1+\theta_3-2\theta_2&\equiv0\mod\pi.
\end{aligned}
\end{equation}

As by Lemma \ref{lem:1} we also have
$\theta_1+\theta_2+\theta_3\equiv0\mod\pi$, which is clearly a
contradiction to \eqref{eqn:6.21}. Hence {\bf Case-II} does not occur.

\vskip 1mm

{\bf Case-III.}\ \ {\it At least two elements of
$\{\cos(\theta_2-\theta_1),
\cos(\theta_3-\theta_2),\cos(\theta_1-\theta_3)\}$ are zero, say
$\cos(\theta_2-\theta_1)=\cos(\theta_1-\theta_3)=0$.}

\vskip 1mm

In this case, it holds that
$$
\begin{aligned}
\theta_2-\theta_1&\equiv\tfrac\pi2\mod\pi,\\
\theta_1-\theta_3&\equiv\tfrac\pi2\mod\pi.
\end{aligned}
$$
This yields that $\theta_2-\theta_3\equiv0\mod\pi$, and then by
Lemma \ref{lem:2} we know that $M$ is totally geodesic. This is a
contradiction to our assumption. Hence {\bf Case-III} does not occur
either.

\vskip 1mm

In conclusion, we have completed the proof of the Main Theorem. \qed

%

\vskip 3mm


\normalsize\noindent


\vskip 5mm

\begin{flushleft}

{\sc School of Mathematics and Statistics, Zhengzhou University,
Zhengzhou 450001, People's Republic of China.}

E-mails:  huzj@zzu.edu.cn; zhangysookk@163.com

\vskip 2mm

{\it Present address of Yinshan Zhang}:

{\sc School of Sciences, Henan University of Engineering, Zhengzhou,
451191, People's Republic of China.}

\end{flushleft}

\end{document}